\newcommand{\Jac}{\operatorname{Jac}}
\newcommand{\Reg}{\operatorname{Reg}}
\newcommand{\Spec}{\operatorname{Spec}}
\newcommand{\Supp}{\operatorname{Supp}}
\newcommand{\V}{\operatorname{V}}
\newcommand{\Max}{\operatorname{Max}}
\newcommand{\Z}{\operatorname{Z}}
\newcommand{\fm}{\frak{m}}
\newcommand{\fp}{\frak{p}}
\newcommand{\fq}{\frak{q}}
\newcommand{\fa}{\frak{a}}
\newcommand{\fb}{\frak{b}}
\newtheorem{thm}{Theorem}[section]
\newtheorem{cor}[thm]{Corollary}
\newtheorem{lem}[thm]{Lemma}
\newtheorem{prop}[thm]{Proposition}
\newtheorem{exam}[thm]{Example}
\newtheorem{rem}[thm]{Remark}
\begin{document}

\bibliographystyle{amsplain}

\date{}

\author{Y. Azimi, P. Sahandi and N. Shirmohammadi}

\address{Department of Pure Mathematics, Faculty of Mathematical Sciences, University of Tabriz, Tabriz,
Iran.} \email{u.azimi@tabrizu.ac.ir}

\address{Department of Pure Mathematics, Faculty of Mathematical Sciences, University of Tabriz, Tabriz,
Iran.} \email{sahandi@ipm.ir}

\address{Department of Pure Mathematics, Faculty of Mathematical Sciences, University of Tabriz, Tabriz,
Iran.} \email{shirmohammadi@tabrizu.ac.ir}

\keywords{Amalgamated algebra, arithmetical ring, Gaussian ring,
Pr\"{u}fer ring, trivial extension}

\subjclass[2010]{Primary 13F05, 13A15}


\title[Pr\"ufer conditions under the amalgamated construction]{Pr\"ufer conditions under the amalgamated construction}

\begin{abstract}
In this paper we improve the recent results on the transfer of
Pr\"ufer, Gaussian and arithmetical conditions on amalgamated
constructions. As an application we provide an answer to a question
posed by Chhiti, Jarrar, Kabbaj and Mahdou as well as we construct
various examples.
\end{abstract}

\maketitle

\section{Introduction}

The notion of Pr\"{u}fer domain appeared for the first time in
\cite{p}. Then Krull further studied and named such domains as
Pr\"{u}fer domains \cite{k36}. Pr\"ufer domains play a central role
in \emph{Multiplicative ideal theory} and several different
characterizations of these domains exist \cite{g72}. Many of these
characterizations have been extended to the case of rings with
zero-divisors. Among them are the Pr\"{u}fer, Gaussian and
arithmetical rings. It is commonly accepted to define
\emph{Pr\"{u}fer rings} as the rings in which every non-zero
finitely generated regular ideal is projective \cite{bs67, g69}. A
commutative ring $R$ is called \emph{Gaussian} if
$c_R(fg)=c_R(f)c_R(g)$ for every two polynomials $f,g\in R[X]$,
where $c_R(f)$ is the content ideal of $f$ \cite{T}. Finally, a
commutative ring is called an \emph{arithmetical} ring if every
finitely generated ideal is locally principal \cite{j66}. It is well
known that an arithmetical ring is Gaussian and a Gaussian ring is
Pr\"{u}fer and that all these coincide in the context of domains.
The aim of this paper is to investigate when the amalgamated algebra
is a Pr\"{u}fer, Gaussian or arithmetical ring.

D'Anna, Finocchiaro, and Fontana in \cite{DFF} and \cite{DFF2} have
announced the following ring construction. Let $R$ and $S$ be two
commutative rings with unity, let $J$ be an ideal of $S$ and let
$f:R\to S$ be a ring homomorphism. They introduced the following
subring
$$R\bowtie^fJ:=\{(r,f(r)+j)\mid r\in R\text{ and }j\in J\}$$ of
$R\times S$, called the \emph{amalgamation of $R$ with $S$ along $J$
with respect to $f$}. This construction generalizes the amalgamated
duplication of a ring along an ideal (introduced and studied in
\cite{DF}). Moreover, several classical constructions such as the
Nagata's idealization, the $R + XS[X]$ and
the $R+XS\llbracket X \rrbracket$ constructions can be studied as
particular cases of this new construction (see \cite[Example 2.5 and Remark 2.8]{DFF}).

Assuming that $J$ and $f^{-1}(J)$ are regular ideals, Finocchiaro in
\cite[Theorem 3.1]{f} obtained that $R\bowtie^fJ$ can be Pr\"ufer
ring only in the trivial case, namely when $R$ and $S$ are Pr\"ufer
rings and $J=S$. He also established some results about the transfer
of some Pr\"ufer-like conditions between $R\bowtie^fJ$ and $R$.
Meanwhile, among other things, the authors in \cite[Theorem 2.2]{k}
find necessary and sufficient conditions under which Pr\"ufer-like
properties transfer between the amalgamated duplication $R\bowtie
I=R\bowtie^{f}I$ of a local ring $R$ and the ring $R$ itself, where
$I$ is an ideal of $R$, $S=R$ and $f$ is the identity map on $R$. In
particular, they proved that $R\bowtie I$ is a Pr\"ufer ring if and
only if $R$ is a Pr\"ufer ring and $I = rI$ for every regular
element $r$ of the unique maximal ideal of $R$. They then asked that
is their characterization valid in the global case? i.e., when $R$
is Pr\"{u}fer (not necessarily local) or locally Pr\"{u}fer
\cite[Question 2.11]{k}.

The outline of the paper is as follows. In Section 2, we determine
the zero-divisors of the amalgamated algebra $R\bowtie^f J$ under
certain conditions. In Section 3, among other things, we attempt to generalize the
results mentioned above to the case of the amalgamated algebra
$R\bowtie^f J$ and, consequently, to answer \cite[Question 2.11]{k}.
In Sections 4 and 5, we investigate the transfer of Gaussian and
arithmetical conditions of the amalgamated algebra $R\bowtie^f J$.
By various examples we examine our results.

\section{Preliminaries}

To determine whether a ring is Pr\"{u}fer  (resp. total ring of
quotients) or not, it is of great importance to know zero-divisors
and regular elements of the ring. This section is devoted to
determine the zero-divisor and regular elements of $R\bowtie^f J$.
Let first us fix some notation which we shall use throughout the
paper: $R$ and $S$ are two commutative rings with unity, $J$ a
proper ideal of the ring $S$, and $f:R\to S$ is a ring homomorphism.
For a commutative ring $A$, $\Max(A)$ denotes the set of maximal
ideals of $A$, $\Jac(A)$ denotes the Jacobson radical of $A$,
$\Reg(A)$ denotes the set of regular elements of $A$, $\Z(M)$
denotes the set of zero-divisors of an $A$-module $M$ and, for an
ideal $I$ of $A$, $\V(I)$ denotes the set of all prime ideals of $A$
containing $I$. An ideal of ring is called a \emph{regular ideal} if
it contains a regular element.

A full description for the set of zero-divisors of the duplication
ring $R\bowtie I$ has been provided in \cite[Proposition 2.2]{y}. In
the following lemma we generalize it to amalgamated algebra. Before
we announce this generalization, we should perhaps point out that
$\{(r,f(r)+j)\mid j'(f(r)+j)=0$, for some $ j'\in J\setminus \{0\}
\}\subseteq \Z(R\bowtie^f J)$ in general.

\begin{lem}\label{zdd}
There is the inclusion $\Z(R\bowtie^f J)\subseteq \{(r,f(r)+j)\mid
r\in \Z(R)\}\cup \{(r,f(r)+j)\mid j'(f(r)+j)=0$, for some $j'\in
J\setminus \{0\} \}$, with equality if at least one of the following
conditions hold:
\begin{itemize}
 \item [(1)] $f(\Z(R))\subseteq J$ and $f^{-1}(J)\neq0$;
 \item [(2)] $f(\Z(R))J=0$ and $f^{-1}(J)\neq0$;
 \item [(3)] $J\subseteq f(R)$;
 \item [(4)] $J$ be a torsion $R$-module.
\end{itemize}
\end{lem}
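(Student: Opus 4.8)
The plan is to prove the stated inclusion first and then, for each of the four conditions, to upgrade it to an equality by exhibiting, for every $r\in\Z(R)$ and every $j\in J$, an explicit nonzero annihilator of $(r,f(r)+j)$. For the inclusion I would start from an arbitrary zero-divisor $(r,f(r)+j)$ and pick a nonzero annihilator $(s,f(s)+k)$; reading off the two coordinates of the product gives $rs=0$ in $R$ and $(f(r)+j)(f(s)+k)=0$ in $S$. The argument then splits on whether $s=0$: if $s\neq0$ then $rs=0$ forces $r\in\Z(R)$, landing in the first set; if $s=0$ the annihilator is $(0,k)$ with $k\in J\setminus\{0\}$, and the second coordinate reads $(f(r)+j)k=0$, landing in the second set with $j'=k$. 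This part is routine.

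The reverse inclusion has two halves. The second set is contained in $\Z(R\bowtie^fJ)$ with no hypotheses at all, since $(0,j')$ is a nonzero element killing $(r,f(r)+j)$ whenever $j'(f(r)+j)=0$ (this is exactly the remark preceding the statement). So everything reduces to showing, under each of (1)--(4), that $r\in\Z(R)$ already forces $(r,f(r)+j)\in\Z(R\bowtie^fJ)$. The uniform idea is to fix a nonzero witness $s$ with $rs=0$ and to look for an annihilator of one of two shapes: either $(s,0)$, which lies in $R\bowtie^fJ$ precisely when $f(s)\in J$ and then kills the second coordinate for free, or $(s,f(s))$, which always lies in $R\bowtie^fJ$ and, since $(f(r)+j)f(s)=f(rs)+jf(s)=jf(s)$, annihilates $(r,f(r)+j)$ exactly when $jf(s)=0$.

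Conditions (1) and (2) are then quick. For (1), if $r\neq0$ then $s\in\Z(R)$ as well, so $f(s)\in J$ and $(s,0)$ works; the case $r=0$ is handled by choosing $t\in f^{-1}(J)\setminus\{0\}$, so that $(t,0)$ is a nonzero annihilator of $(0,j)$. For (2), again $s\in\Z(R)$ forces $jf(s)\in f(\Z(R))J=0$, so $(s,f(s))$ works, with the case $r=0$ dealt with as before via $f^{-1}(J)\neq0$.

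The delicate conditions are (3) and (4), and the recurring difficulty is guaranteeing that the chosen annihilator is genuinely nonzero. For (3) the key trick is the ideal inclusion $I_1I_2\subseteq I_1\cap I_2$: writing $j=f(b)$ with $b\in f^{-1}(J)$ (possible since $J\subseteq f(R)$), I would split on whether $\Ann_R(r)\cap f^{-1}(J)=0$. If it is nonzero, a witness $s$ in this intersection satisfies $f(s)\in J$ and $(s,0)$ works; if it is zero, then $bs\in f^{-1}(J)\cdot\Ann_R(r)\subseteq f^{-1}(J)\cap\Ann_R(r)=0$, whence $jf(s)=f(bs)=0$ and $(s,f(s))$ works. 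For (4), I would apply the torsion hypothesis to $j$ to obtain a regular $t\in R$ with $f(t)j=0$ and take the annihilator $(ts,f(ts))$: the computation $(f(r)+j)f(ts)=f(rts)+f(s)\,f(t)j=0$ (using $rts=t(rs)=0$ and $f(t)j=0$) shows it annihilates $(r,f(r)+j)$, and the place where the hypothesis is essential is exactly nonzeroness, since regularity of $t$ together with $s\neq0$ forces $ts\neq0$. I expect the main obstacle to be precisely this nonvanishing bookkeeping, and in particular pinning down that the torsion in (4) must be taken with respect to regular (non-zero-divisor) elements, for otherwise the annihilator $(ts,f(ts))$ can collapse to zero.
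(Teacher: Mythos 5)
Your proposal is correct, and for the inclusion and for conditions (1), (2) and (4) it is essentially identical to the paper's proof: the same reading-off of coordinates with the split on $s=0$, the same annihilators $(s,0)$ and $(s,f(s))$, the same element $(a,0)$ with $0\neq a\in f^{-1}(J)$ for the case $r=0$, and the same $(ts,f(ts))$ with $t\in\Reg(R)$ in the torsion case --- including your closing observation that regularity of $t$ is exactly what guarantees $ts\neq0$. Where you genuinely diverge is case (3). The paper fixes a witness $s$ with $rs=0$ and splits on whether $f(s)J=0$: if so, $(s,f(s))$ annihilates $(r,f(r)+j)$ since $jf(s)=0$; if not, it picks $k\in J$ with $f(s)k\neq0$, writes $k=f(t)$ using $J\subseteq f(R)$, and notes that $st\neq0$ (because $f(st)=f(s)k\neq0$) while $(st,f(st)-f(s)k)=(st,0)$ lies in $R\bowtie^fJ$ and kills $(r,f(r)+j)$ outright. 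You instead split intrinsically on whether $\Ann_R(r)\cap f^{-1}(J)=0$, and in the nonzero branch use $(s,0)$, while in the zero branch you take the preimage $b$ of the specific $j$ at hand and use the product-in-intersection inclusion $f^{-1}(J)\Ann_R(r)\subseteq f^{-1}(J)\cap\Ann_R(r)=0$ to get $bs=0$, hence $jf(s)=f(bs)=0$ and $(s,f(s))$ works. Both routes are equally elementary and both are complete; yours has the mild advantage that the dichotomy depends only on $r$ rather than on the chosen witness $s$, whereas the paper's has the advantage that its annihilator in the delicate subcase has vanishing second component, so the verification is immediate --- and its nonvanishing step ($st\neq0$ via $f(st)=f(s)k\neq0$) is precisely the bookkeeping issue you anticipated.
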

\begin{proof}
Assume that $(r,f(r)+j)\in\Z(R\bowtie^f J)$. Then
$(r,f(r)+j)(s,f(s)+j')=0$ for some non-zero element $(s,f(s)+j')\in
R\bowtie^f J$. Hence $rs=0$ and $jf(s)+j'(f(r)+j)=0$. If $s=0$, then
$j'\neq0$ and $j'(f(r)+j)=0$. Otherwise, $r\in \Z(R)$. This proves
the inclusion.

We already have the inclusion $\{(r,f(r)+j)\mid j'(f(r)+j)=0$, for
some $j'\in J\setminus \{0\} \}\subseteq \Z(R\bowtie^f J)$. To
complete the proof, it is enough for us to show that
$\{(r,f(r)+j)\mid r\in\Z(R)$ and $j\in J\}\subseteq\Z(R\bowtie^f J)$
under the validity of any of conditions (1)--(4). Let $(r,f(r)+j)\in
R\bowtie^f J$ where $r\in \Z(R)$ and $j\in J$. Then there is a
non-zero element $s\in R$ such that $rs=0$.

(1) If $r\neq0$, we have $(r,f(r)+j)(s,f(s)+(-f(s)))=0$. So assume
that $r=0$ and choose $0\neq a\in f^{-1}(J)$. Then $(0,i)(a,0)=0$.

(2) If $r\neq0$, we have $(r,f(r)+j)(s,f(s))=0$. So assume that
$r=0$ and choose $0\neq a\in f^{-1}(J)$. Then $(0,i)(a,0)=0$.

(3) If $f(s)J=0$, then $(r,f(r)+j)(s,f(s))=0$. Assume now that
$f(s)J\neq0$. Then there is $0\neq k\in J$ such that $f(s)k\neq 0$.
By assumption $k=f(t)$ for some $t\in R$. It is clear that $st\neq0$
and $(r,f(r)+i)(st,f(st)-f(s)k)=0$.

(4) Let $t\in \Reg(R)$ such that $f(t)j=0$. It is easy to see that
$ts\neq 0$ and $(r,f(r)+i)(ts,f(ts))=0$.
\end{proof}

We say that the amalgamated ring $R\bowtie^f J$ has the
\emph{condition $\star$} if the equality
$$\Z(R\bowtie^f J)=\{(r,f(r)+j)\mid r\in \Z(R)\}\cup \{(r,f(r)+j)\mid
j'(f(r)+j)=0, \exists j'\in J\setminus \{0\} \}$$ holds. Lemma
\ref{zdd} now says that the amalgamated ring $R\bowtie^f J$
involving one of the mentioned conditions has the condition $\star$.
In particular when $f$ is surjective, by part (3) of Lemma
\ref{zdd}, $R\bowtie^f J$ has the condition $\star$. Therefore the
amalgamated duplication of a ring along an ideal has the condition
$\star$. Although the trivial extension of a ring by a module does
not satisfy any conditions mentioned in Lemma \ref{zdd}, the next
remark illustrates that the trivial extension of a ring by a module
also has this condition. So we have a bunch of examples satisfying
the condition $\star$.

Let $M$ be an $R$-module. Nagata (1955) gave a ring
extension of $R$ called the \emph{trivial extension} of $R$ by $M$
(or the \emph{idealization} of $M$ in $R$), denoted here by
$R\ltimes M$ \cite[page 2]{Na}. It should be noted that the module
$M$ becomes an ideal in $R\ltimes M$ and $(0\ltimes M)^2=0$. As in
\cite[Remark 2.8]{DFF}, if $S:=R\ltimes M$, $J:=0\ltimes M$, and
$\iota:R\to S$ be the natural embedding, then $R\bowtie^\iota J\cong
R\ltimes M$ which maps the element $(r,\iota(r)+(0,m))$ to the
element $(r,m)$.

\begin{rem}\label{Axtel}
Let $M$ be an $R$-module. By \cite[Proposition 1.1]{as06}, one has
$$\Z(R\ltimes M)=\{(r,m)\mid
r\in\Z(R)\cup\Z(M)\text{ and }m\in M\}.$$
This equality together
with the isomorphism mentioned above shows that the trivial
extension $R\ltimes M$ has the condition $\star$.
\end{rem}

\section{Transfer of Pr\"{u}fer condition}

In this section we investigate the transfer of Pr\"{u}fer condition
on the amalgamated algebra $R\bowtie^f J$. The following concept and
lemmas are crucial in this investigation.

Let $\fp$ be a prime ideal of $R$. Then $(R, \fp)$ is said to has
the \emph{regular total order property} if, for each pair of ideals
$\fa$ and $\fb$ of $R$, at least one of which is regular, the ideals
$\fa R_\fp$ and $\fb R_\fp$ are comparable. Using this notion,
Griffin \cite[Theorem 13]{g69} obtained the following useful
characterization of Pr\"{u}fer rings which we use it frequently.

\begin{thm} \label{rto}
The ring $R$ is Pr\"{u}fer if and only if $(R,\fm)$ has the regular
total order property for every $\fm\in\Max(R)$.
\end{thm}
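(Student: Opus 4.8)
The plan is to prove both directions by localizing at a maximal ideal $\fm$ and reducing the regular total order property to the behaviour of \emph{finitely generated} ideals, where I can invoke the standard equivalence that, for a finitely generated regular ideal, being projective, being invertible, and being locally principal (by a regular element of the localization) all coincide. In particular I will use that $R$ is Pr\"ufer precisely when every finitely generated regular ideal is invertible, and that an invertible ideal $\fa$ localizes as $\fa R_\fm=cR_\fm$ with $c\in\Reg(R_\fm)$. A preliminary reduction I would establish is that $(R,\fm)$ has the regular total order property if and only if $\fa R_\fm$ and $\fb R_\fm$ are comparable for all \emph{finitely generated} ideals $\fa,\fb$, one of which is regular: if comparability failed for general ideals, I could choose $x\in\fa$ with $x\notin\fb R_\fm$ and $y\in\fb$ with $y\notin\fa R_\fm$ (with $a\in\fa$ regular), and applying the finitely generated case to the regular ideal $(a,x)$ and to $(y)$ would contradict one of these choices.

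For the forward implication, assume $R$ is Pr\"ufer, fix $\fm\in\Max(R)$, and take finitely generated ideals $\fa,\fb$ with $\fa$ regular. Then $\fa+\fb$ is finitely generated and regular, hence invertible, so $(\fa+\fb)R_\fm=cR_\fm$ for some $c$ regular in $R_\fm$. Since $\fa R_\fm,\fb R_\fm\subseteq cR_\fm$ and $c$ is a non-zero-divisor, I can cancel $c$ and write $\fa R_\fm=cI$ and $\fb R_\fm=cK$ for ideals $I,K$ of $R_\fm$; then $cR_\fm=cI+cK$ forces $I+K=R_\fm$, and since $R_\fm$ is local this gives $I=R_\fm$ or $K=R_\fm$. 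Hence $\fa R_\fm=cR_\fm\supseteq\fb R_\fm$ or $\fb R_\fm=cR_\fm\supseteq\fa R_\fm$, so the two are comparable and, by the reduction, $(R,\fm)$ has the regular total order property.

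For the converse, assume the regular total order property holds at every $\fm\in\Max(R)$, and let $\fa=(a_0,a_1,\dots,a_n)$ be finitely generated and regular with $a_0\in\Reg(R)$. I would show that $\fa R_\fm$ is principal for each $\fm$ by induction on $n$: setting $\mathfrak{c}:=(a_0,\dots,a_{n-1})$, which is finitely generated and regular, the inductive hypothesis makes $\mathfrak{c}R_\fm$ principal, while the regular total order property applied to $\mathfrak{c}$ and $(a_n)$ makes $\mathfrak{c}R_\fm$ and $(a_n)R_\fm$ comparable; therefore $\fa R_\fm=\mathfrak{c}R_\fm+(a_n)R_\fm$ is the larger of two principal ideals, hence principal. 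Thus every finitely generated regular ideal of $R$ is locally principal, so invertible, and $R$ is Pr\"ufer.

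The step I expect to be most delicate is the handling of regularity under localization: a regular element of $R$ may fail to be regular in $R_\fm$, so the regular local generator $c$ in the forward direction must be produced from the \emph{invertibility} of $\fa+\fb$ rather than from a chosen regular element, and the cancellation $\fa R_\fm=cI$ is legitimate only because $c$ is a non-zero-divisor in $R_\fm$. Getting these regularity bookkeeping details right, together with the local step $I+K=R_\fm\Rightarrow I=R_\fm\text{ or }K=R_\fm$, is the crux; by comparison the finitely generated reduction and the inductive argument in the converse are routine.
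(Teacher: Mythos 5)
Your proof is correct, but be aware that the paper itself contains no proof of this statement: it is quoted directly from Griffin's work (Theorem 13 of \emph{Pr\"ufer rings with zero divisors}), so your argument is by necessity a different route --- a self-contained localization proof in place of a citation. Your route is sound: the reduction of the regular total order property to pairs of \emph{finitely generated} ideals via the auxiliary ideals $(a,x)$ and $(y)$ works; in the forward direction you correctly extract the local generator $c\in\Reg(R_\fm)$ from the \emph{invertibility} of $\fa+\fb$ rather than from a regular element of $R$ (whose image in $R_\fm$ may well be a zero divisor --- the same caveat the paper records in its Lemma on localization of regular elements), and the cancellation $c(I+K)=cR_\fm\Rightarrow I+K=R_\fm$ together with locality of $R_\fm$ finishes that direction; the converse induction correctly yields that every finitely generated regular ideal is locally principal.

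One wrinkle deserves attention, though it is not a genuine gap. Your induction produces $\fa R_\fm$ principal with a generator that is the image of one of the chosen generators and need \emph{not} be regular in $R_\fm$, whereas the ``standard equivalence'' you invoke at the outset is phrased with local principality \emph{by a regular element of the localization}. So in the converse you must use the stronger form of the dictionary: a finitely generated \emph{regular} ideal that is merely locally principal is already invertible. This is true and classical (it is in Larsen--McCarthy, and is what underlies the equivalent definition of Pr\"ufer rings via locally principal regular ideals), and it can be checked in the spirit of your own bookkeeping: fix $d\in I$ regular; then $I^{-1}=d^{-1}(dR:_RI)$, the colon ideal localizes because $I$ is finitely generated, and if $I_\fm=aR_\fm$ with $d/1=at$, then $t\in(dR_\fm:_{R_\fm}I_\fm)$ and $a(dR_\fm:_{R_\fm}aR_\fm)=dR_\fm$, whence $I\cdot(dR:_RI)=dR$ globally and $II^{-1}=R$. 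Note that no regularity of the local generator $a$ is needed here. With that fact either cited or proved, your argument is complete, and it has the advantage over the paper's bare citation of making visible exactly where regularity is used (only through the regular element $d$ and the invertibility of $\fa+\fb$) --- precisely the kind of bookkeeping the paper later relies on when it applies Griffin's criterion through its Lemmas on $\Z(R)\subseteq\Jac(R)$.
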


The following basic lemma will be used throughout this section.

\begin{lem}\label{zd}
Let $U$ be a multiplicatively closed subset of the ring $R$. If
$r\in\Reg(R)$, then $r/1\in\Reg(R_U)$. The converse holds whenever
$U\cap \Z (R)=\phi$. In particular, if $\Z(R)\subseteq \Jac (R)$,
then, for a maximal ideal $\fm$, one has $r/1\in \Reg(R_\fm)$ if and
only if $r\in \Reg (R)$.
\end{lem}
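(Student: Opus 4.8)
The plan is to verify each implication directly from the construction of the localization $R_U$, recalling the standard fact that a fraction $s/u$ vanishes in $R_U$ precisely when $vs=0$ for some $v\in U$. The statement splits into three assertions: the unconditional forward direction, the converse under $U\cap\Z(R)=\phi$, and the ``in particular'' clause, which I treat as a specialization of the first two.

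For the forward direction, suppose $r\in\Reg(R)$ and that $(r/1)(s/u)=0$ in $R_U$ for some $s\in R$ and $u\in U$. Unwinding this gives $vrs=0$ for some $v\in U$, which I rewrite as $r(vs)=0$; since $r$ is regular in $R$, this forces $vs=0$. But $vs=0$ with $v\in U$ is exactly the assertion that $s/u=0$ in $R_U$, so $r/1$ annihilates only $0$ and is therefore regular in $R_U$.

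For the converse, assume $U\cap\Z(R)=\phi$ and $r/1\in\Reg(R_U)$, and suppose $rs=0$ in $R$. Passing to $R_U$ yields $(r/1)(s/1)=0$, and regularity of $r/1$ gives $s/1=0$, so $us=0$ for some $u\in U$. This is the one place the hypothesis on $U$ enters: since $u\in U$ does not lie in $\Z(R)$, it is a regular element of $R$, and hence $us=0$ forces $s=0$. Thus $r\in\Reg(R)$. Finally, for the ``in particular'' clause I would take $U=R\setminus\fm$, so that $R_U=R_\fm$; the assumption $\Z(R)\subseteq\Jac(R)\subseteq\fm$ yields $U\cap\Z(R)=\phi$, and the claimed equivalence follows by combining the two implications already established.

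The argument is essentially routine, so I do not anticipate a serious obstacle; the only genuinely delicate point is the role of the hypothesis $U\cap\Z(R)=\phi$ in the converse. Without it, the witness $u$ satisfying $us=0$ could itself be a zero-divisor, and one could no longer cancel it to deduce $s=0$ — indeed the converse fails in general, which is precisely why the hypothesis is recorded explicitly.
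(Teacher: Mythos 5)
Your proof is correct: the paper states this lemma without proof, treating it as a routine fact, and your argument is precisely the standard verification that would be supplied, with each implication unwound directly from the definition of equality of fractions in $R_U$. You also correctly isolate the only delicate point, namely that $U\cap\Z(R)=\phi$ is exactly what lets you cancel the witness $u$ in $us=0$, and that the specialization $U=R\setminus\fm$ satisfies this because $\Z(R)\subseteq\Jac(R)\subseteq\fm$.
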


This lemma enables us to present the following version
of the theorem above which is very useful for us.

\begin{lem}\label{rto p}
Assume that $\Z(R)\subseteq \Jac (R)$ and that $\fm$ is a maximal
ideal of the ring $R$. Then $(R,\fm)$ has the regular total order
property if and only if the principal ideals $xR_\fm$ and $yR_\fm$
are comparable for all $x,y\in R$ with $x$ is regular.
\end{lem}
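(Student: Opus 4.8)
The plan is to prove the two implications separately, with essentially all of the work concentrated in the ``if'' direction. The ``only if'' direction is immediate from the definition of the regular total order property: given $x,y\in R$ with $x$ regular, the principal ideal $xR$ is a regular ideal, so applying the property to the pair $\fa=xR$ and $\fb=yR$ shows that $xR_\fm=(xR)R_\fm$ and $yR_\fm=(yR)R_\fm$ are comparable.

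For the converse, I would first reduce the comparability of arbitrary extended ideals to a pointwise statement about elements. Taking $\fa$ to be the regular member of the pair, I claim it suffices to prove that for every $a\in\fa$ and every $b\in\fb$ one has $a/1\in\fb R_\fm$ or $b/1\in\fa R_\fm$. Indeed, if $\fa R_\fm$ and $\fb R_\fm$ were incomparable, then (using that every element of an extended ideal has the form $a/s$ with $a\in\fa$ and $s\notin\fm$, and that $s$ is a unit in $R_\fm$) I could choose $a\in\fa$ with $a/1\notin\fb R_\fm$ and $b\in\fb$ with $b/1\notin\fa R_\fm$, directly contradicting the pointwise dichotomy.

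The heart of the argument is then a case analysis driven by a fixed regular element $r\in\fa$, which exists because $\fa$ is regular, together with Lemma \ref{zd} to pass regularity between $R$ and $R_\fm$. Given $a\in\fa$ and $b\in\fb$, since $r$ is regular the hypothesis lets me compare $rR_\fm$ with $bR_\fm$. If $bR_\fm\subseteq rR_\fm$, then $b/1\in rR_\fm\subseteq\fa R_\fm$ and the dichotomy holds. Otherwise $rR_\fm\subseteq bR_\fm$, so $r/1\in bR_\fm$; as $r$ is regular, $r/1$ is regular in $R_\fm$ by Lemma \ref{zd}, whence its factor $b/1$ is a nonzerodivisor in $R_\fm$ as well, and the converse half of Lemma \ref{zd} returns this to $b\in\Reg(R)$. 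Now that $b$ is a genuine regular element of $R$, the hypothesis applies to the pair $bR_\fm$, $aR_\fm$: if $aR_\fm\subseteq bR_\fm$ then $a/1\in\fb R_\fm$, and if $bR_\fm\subseteq aR_\fm$ then $b/1\in aR_\fm\subseteq\fa R_\fm$. In every branch the required pointwise dichotomy is established.

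I expect the main obstacle to be precisely this middle step, where one must manufacture a regular element of $R$ out of the comparison data. The hypothesis only permits comparing a principal ideal against one generated by an element that is regular \emph{in $R$}, so knowing merely that $b/1$ is regular in $R_\fm$ is not by itself enough to continue. It is the converse direction of Lemma \ref{zd}, and hence the standing assumption $\Z(R)\subseteq\Jac(R)$, that licenses the passage from $b/1\in\Reg(R_\fm)$ to $b\in\Reg(R)$ and allows the case analysis to close.
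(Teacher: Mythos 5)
Your proof is correct and takes essentially the same route as the paper: both arguments fix a regular element of the regular ideal, compare its principal ideal in $R_\fm$ against principal ideals generated by elements of the two ideals, and invoke Lemma \ref{zd} together with the hypothesis $\Z(R)\subseteq\Jac(R)$ to certify an element of $R$ itself regular so that the comparability hypothesis can be applied a second time. The only differences are organizational: you argue directly via a pointwise dichotomy where the paper runs a nested proof by contradiction, and you upgrade the element coming from $\fb$ to a regular element whereas the paper upgrades the element coming from $\fa$.
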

\begin{proof}
Assume that the principal ideals $xR_\fm$ and $yR_\fm$ are
comparable for all $x,y\in R$ with $x$ is regular. Let $\fa$ and
$\fb$ be ideals of $R$, and $x$ be a regular element of $R$ in
$\fa$. Suppose that $y/1\in \fb_\fm \setminus \fa_\fm$. It follows
that $xR_\fm \subseteq yR_\fm$. To conclude the inclusion
$\fa_{\fm}\subseteq \fb_\fm$, it is enough for us to show that
$\fa_\fm \subseteq yR_\fm$. Suppose on the contrary that there
exists an element $a/1\in \fa_\fm \setminus yR_\fm$. Since $aR_\fm
\nsubseteq xR_\fm$, it follows that $xR_\fm \subseteq aR_\fm$. It
turns out that $a/1$ is regular. This in conjunction with Lemma
\ref{zd} implies that $a$ is regular. Hence $yR_\fm$ and $aR_\fm$
are comparable which is a contradiction.
\end{proof}

In the sequel, we will use the following
properties of amalgamated rings without explicit comments.

\begin{rem}\label{rem}
The following statements hold.
\begin{enumerate}
\item (\cite[Corollaries 2.5 and 2.7]{DFF1})
For $\fp\in\Spec(R)$ and $\fq\in\Spec(S)\setminus \V(J)$, set
\begin{align*}
\fp^{\prime_f}:= & \fp\bowtie^fJ:=\{(p,f(p)+j)|p\in \fp, j\in J\}, \\[1ex]
\overline{\fq}^f:= & \{(r,f(r)+j)|r\in R, j\in J, f(r)+j\in \fq\}.
\end{align*}
Then, one has the following.
\begin{enumerate}
\item The prime ideals of $R\bowtie^fJ$ are of the type $\overline{\fq}^f$ or $\fp^{\prime_f}$, for
$\fq$ varying in $\Spec(S)\setminus \V(J)$ and $\fp$ in $\Spec(R)$.
\item $\Max(R\bowtie^fJ)=\{\fp^{\prime_f}|\fp\in\Max(R)\}\cup\{\overline{\fq}^f|\fq\in\Max(S)\setminus \V(J)\}$.
\end{enumerate}
\item (\cite[Proposition 2.9]{DFF1}) The following formulas for localizations hold.
\begin{enumerate}
\item For any $\fq\in\Spec(S)\setminus\V(J)$, the localization $(R\bowtie^fJ)_{\overline{\fq}^f}$ is
canonically isomorphic to $S_{\fq}$. This isomorphism maps the
element $(r,f(r)+j)/(r',f(r')+j')$ to $(f(r)+j)/(f(r')+j')$.
\item For any $\fp\in\Spec(R)\setminus\V(f^{-1}(J))$, the
localization $(R\bowtie^fJ)_{\fp^{\prime_f}}$ is canonically
isomorphic to $R_{\fp}$. This isomorphism maps the element
$(r,f(r)+j)/(r',f(r')+j')$ to $r/r'$.
\item For any $\fp\in\Spec(R)$ containing $f^{-1}(J)$, consider the multiplicative
subset $T_{\fp}:=f(R\setminus\fp)+J$ of $S$ and set
$S_{T_{\fp}}:=T_{\fp}^{-1}S$ and $J_{T_{\fp}}:=T_{\fp}^{-1}J$. If
$f_{\fp}:R_{\fp}\to S_{T_{\fp}}$ is the ring homomorphism induced by
$f$, then the ring $(R\bowtie^fJ)_{\fp^{\prime_f}}$ is canonically
isomorphic to $R_{\fp}\bowtie^{f_{\fp}}J_{T_{\fp}}$. This
isomorphism maps the element $(r,f(r)+j)/(r',f(r')+j')$ to
$(r/r',(f(r)+j)/(f(r')+j'))$.
\end{enumerate}
\end{enumerate}
\end{rem}

We are now ready to state and prove the main result of this section.
In fact, it provides a partial converse of \cite[Proposition 4.2]{f} as
well as a generalization of \cite[Theorem 2.2]{k}.

\begin{thm}\label{amalg}
Assume that $f(\Reg (R))\subseteq \Reg (S)$.
\begin{enumerate}
\item If $R\bowtie^f J$ is a Pr\"{u}fer ring, then $R$ is a Pr\"{u}fer ring
and $J_{T_\fm}=f(r) J_{T_\fm}$ for every $\fm\in\Max(R)$ and every
$r\in\Reg(R)$.
\item Assume that $R\bowtie^f J$ has the condition $\star$ and that $\Z(R\bowtie^f J)\subseteq\Jac(R\bowtie^f J)$. If  $R$ is a Pr\"{u}fer
ring and $J_{T_\fm}=f(r) J_{T_\fm}$ for every $\fm\in\Max(R)$ and
every $r\in\Reg(R)$, then $R\bowtie^f J$ is a Pr\"{u}fer ring.
\end{enumerate}
\end{thm}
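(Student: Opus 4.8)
Both implications are approached through Griffin's criterion (Theorem~\ref{rto}): $A:=R\bowtie^f J$ is Pr\"ufer iff $(A,\fM)$ has the regular total order property for every $\fM\in\Max(A)$, and the localizations are read off from Remark~\ref{rem}. The elementary fact used repeatedly is that $f(\Reg(R))\subseteq\Reg(S)$ forces $(r,f(r))\in\Reg(A)$ for $r\in\Reg(R)$: if $(r,f(r))(s,f(s)+j')=0$, then $rs=0$ gives $s=0$ and $f(r)j'=0$ gives $j'=0$. For part~(1), to prove $R$ Pr\"ufer I fix $\fm\in\Max(R)$ and ideals $\fa,\fb$ of $R$ with $r\in\fa\cap\Reg(R)$. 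The ideal $\fa\bowtie^f J:=\{(u,f(u)+j)\mid u\in\fa,\ j\in J\}$ of $A$ is regular (it contains $(r,f(r))$), so the regular total order property of $(A,\fm^{\prime_f})$ makes $(\fa\bowtie^f J)A_{\fm^{\prime_f}}$ and $(\fb\bowtie^f J)A_{\fm^{\prime_f}}$ comparable; transporting along $A_{\fm^{\prime_f}}\cong R_\fm$ (if $f^{-1}(J)\not\subseteq\fm$) or along the projection $R_\fm\bowtie^{f_\fm}J_{T_\fm}\to R_\fm$ (if $f^{-1}(J)\subseteq\fm$) gives comparability of $\fa R_\fm$ and $\fb R_\fm$. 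Hence $R$ is Pr\"ufer.

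For the equality $J_{T_\fm}=f(r)J_{T_\fm}$ I may assume $f^{-1}(J)\subseteq\fm$, since otherwise $0\in T_\fm$, so $S_{T_\fm}=0$ and the claim is trivial. Working in $A_{\fm^{\prime_f}}\cong R_\fm\bowtie^{f_\fm}J_{T_\fm}$, I apply the regular total order property of $(A,\fm^{\prime_f})$ to the regular ideal $(r,f(r))A$ and to $(0,j)A$ for $j\in J$. The inclusion $(r,f(r))A_{\fm^{\prime_f}}\subseteq(0,j)A_{\fm^{\prime_f}}$ would force $r/1=0$, so the comparison must read $(0,j/1)\in(r,f(r))A_{\fm^{\prime_f}}$; dividing (and using that $r/1$ is regular) yields $j/1=f(r)l$ for some $l\in J_{T_\fm}$. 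Since the $j/1$ together with unit multiples exhaust $J_{T_\fm}$, this gives $J_{T_\fm}\subseteq f(r)J_{T_\fm}$, hence equality.

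For part~(2) I verify the regular total order property at each $\fM\in\Max(A)$; as $\Z(A)\subseteq\Jac(A)$, Lemma~\ref{rto p} reduces this to comparability of $xA_\fM$ and $yA_\fM$ for $x\in\Reg(A)$, $y\in A$. Two reductions precede the case analysis. First, condition~$\star$ with $\Z(A)\subseteq\Jac(A)$ yields $\Z(R)\subseteq\Jac(R)$: for $r\in\Z(R)$ one has $(r,f(r))\in\Z(A)\subseteq\Jac(A)$, and projecting to $R$ gives $r\in\Jac(R)$; thus Lemma~\ref{rto p} is available over $R$ as well. Second, the same hypotheses kill the maximal ideals of type $\overline{\fq}^f$: since $0\in\Z(R)$, condition~$\star$ places every $(0,j)$, $j\in J$, in $\Z(A)\subseteq\Jac(A)\subseteq\overline{\fq}^f$, forcing $J\subseteq\fq$ against $\fq\notin\V(J)$. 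Hence only $\fM=\fp^{\prime_f}$, $\fp\in\Max(R)$, survive. If $f^{-1}(J)\not\subseteq\fp$, then $A_\fM\cong R_\fp$, and writing $x=(r,f(r)+j)$ with $r\in\Reg(R)$ (forced by condition~$\star$), the Pr\"ufer property of $R$ together with Lemma~\ref{rto p} over $R$ makes the images $rR_\fp$ and $sR_\fp$ comparable.

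The crux is $f^{-1}(J)\subseteq\fp$, where $B:=A_\fM\cong R_\fp\bowtie^{f_\fp}J_{T_\fp}$ is local. Locality of $B$ forbids maximal ideals $\overline{\fq'}^{f_\fp}$, hence forces $J_{T_\fp}\subseteq\Jac(S_{T_\fp})$, and the hypothesis upgrades to $f_\fp(\rho)J_{T_\fp}=J_{T_\fp}$ for every regular $\rho\in R_\fp$. For $\bar x=(a,\alpha)\in\Reg(B)$ (so $a\in\Reg(R_\fp)$, again by condition~$\star$ upstairs) and $\bar y=(b,\beta)$, I compare $aR_\fp$ and $bR_\fp$ using that $R_\fp$ is local Pr\"ufer, then lift: if $b=ac$, solving $\bar y\in\bar xB$ amounts to finding $l\in J_{T_\fp}$ with $\alpha l=k'-kf_\fp(c)$, where $\alpha=f_\fp(a)+k$ and $\beta=f_\fp(b)+k'$; as $k'-kf_\fp(c)\in J_{T_\fp}$, this is solvable precisely when $\alpha J_{T_\fp}=J_{T_\fp}$. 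I expect this equality to be the main obstacle, and the step that clears it is a unit-factorization: writing $k=f_\fp(a)k_0$ with $k_0\in J_{T_\fp}$ (possible because $f_\fp(a)J_{T_\fp}=J_{T_\fp}$, $a$ being regular), one has $\alpha=f_\fp(a)(1+k_0)$ with $1+k_0$ a unit of $S_{T_\fp}$ (as $k_0\in J_{T_\fp}\subseteq\Jac(S_{T_\fp})$), so $\alpha J_{T_\fp}=f_\fp(a)J_{T_\fp}=J_{T_\fp}$. This sidesteps any finiteness/Nakayama hypothesis on $J_{T_\fp}$, which is the real trap. The symmetric subcase $aR_\fp\subseteq bR_\fp$ (where $b$ inherits regularity) is identical with $\beta J_{T_\fp}=J_{T_\fp}$, and assembling the cases shows $(A,\fM)$ has the regular total order property for all $\fM$, i.e.\ $A$ is Pr\"ufer.
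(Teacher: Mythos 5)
Your proposal is correct and follows essentially the same route as the paper: Griffin's criterion via Lemma~\ref{rto p}, the reductions $\Z(R)\subseteq\Jac(R)$ and the exclusion of maximal ideals of type $\overline{\fq}^f$ from condition~$\star$, the case split on $f^{-1}(J)\subseteq\fm$, and the same unit-factorization $f_\fp(a)+k=f_\fp(a)(1+k_0)$ that the paper realizes as $(v+l)/v\in T_\fm$. You additionally fill in details the paper leaves implicit --- a direct proof that $R$ is Pr\"ufer in part~(1) instead of citing \cite[Proposition 4.2]{f}, the trivial case $f^{-1}(J)\nsubseteq\fm$ where $S_{T_\fm}=0$, and the symmetric subcase $aR_\fp\subseteq bR_\fp$ (where $b$ inherits regularity) that the paper dismisses with ``we may and do assume'' --- all of which check out.
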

\begin{proof}
(1) Assume that $R\bowtie^f J$ is a Pr\"ufer ring. By
\cite[Proposition 4.2]{f}, $R$ is a Pr\"ufer ring. Let
$r\in\Reg(R)$, $\fm\in\Max(R)$, and $k/x\in J_{T_\fm}$. It is easy
to see that $(r,f(r))\in\Reg(R\bowtie^f J)$. By Theorem \ref{rto},
the ideal $(r,f(r))(R\bowtie^f J)_{\fm^{'f}}$ is comparable with any
ideal of $(R\bowtie^f J)_{\fm^{'f}}$. Hence the principal ideal of
$R_\fm \bowtie^{f_\fm} J_{T_\fm}$ generated by $(r/1,f_\fm(r/1))$ is
comparable with the principal ideal generated by $(0,k/x)$. If
$(r/1,f_\fm(r/1))\left(R_\fm \bowtie^{f_\fm}
J_{T_\fm}\right)\subseteq (0,k/x)\left(R_\fm \bowtie^{f_\fm}
J_{T_\fm}\right)$, then $(r/1,f_\fm (r/1))=(0,k/x)
(s/z,f_\fm(s/z)+j/y)$ for some $(s/z,f_\fm(s/z)+j/y) \in R_\fm
\bowtie^{f_\fm} J_{T_\fm}$ which implies that $r/1=0$. This is a
contradiction since $r/1\in\Reg(R_\fm)$ by Lemma \ref{zd}. So we may
assume that the inclusion $(0,k/x)\left(R_\fm \bowtie^{f_\fm}
J_{T_\fm}\right)\subseteq(r/1,f_\fm(r/1))\left(R_\fm \bowtie^{f_\fm}
J_{T_\fm}\right)$ holds. Thus there exists $(s/z,f_\fm(s/z)+j/y)\in
R_\fm \bowtie^{f_\fm}J_{T_\fm}$ such that
$(0,k/x)=(r/1,f_\fm(r/1))(s/z,f_\fm(s/z)+j/y)$. This means that
$s/z=0$ and $k/x=f_\fm( r/1)j/y=f(r)j/y\in f(r) J_{T_\fm}$, as
desired.

(2) By Theorem \ref{rto}, it is sufficient to prove that
$(R\bowtie^f J, \mathcal{M})$ has the regular total order property
for every maximal ideal $\mathcal{M}$ of $R\bowtie^f J$. On the
other hand, it follows from the hypothesis that $J\subseteq \Jac(S)$
and $\Z(R)\subseteq\Jac(R)$. Indeed if $J\nsubseteq\Jac(S)$, there
exists a $\fq\in\Max(S)\setminus \V(J)$. Then $\{(0,j)\mid j\in
J\}\subseteq \Z(R\bowtie^f J)\subseteq\Jac(R\bowtie^f
J)\subseteq\overline{\fq}^f$. This yields $J\subseteq \fq$ a
contradiction. For the second inclusion assume $a\in\Z(A)$ and
$\fm\in\Max(R)$. Then $(a,f(a))\in\Z(R\bowtie^f
J)\subseteq\Jac(R\bowtie^f J)\subseteq\fm^{\prime_f}$, which implies
that $a\in\fm$. Therefore we have two cases to consider:

\textbf{Case 1.} Assume that $\mathcal M= \fm^{\prime_f}$ for some
$\fm\in\Max(R)$ such that $f^{-1}(J)\nsubseteq\fm$. Let
$(r,f(r)+j)\in \Reg (R\bowtie^f J)$. Then $(r,f(r)+j)/(1,1)\in\Reg
((R\bowtie^f J)_{\fm^{\prime_f}})$. Using the isomorphism
$(R\bowtie^f J)_{\fm^{\prime_f}}\cong R_\fm$, one has
$r/1\in\Reg(R_\fm)$ which implies that $r\in\Reg(R)$ by Lemma
\ref{zd}. Since $(R,\fm)$ has the regular total order property, the
ideal $rR_\fm$ is comparable with every ideal of $R_\fm$.
Consequently, the ideal $(r,f(r)+j)(R\bowtie^f J)_{\fm^{\prime_f}}$
is comparable with every ideal of $(R\bowtie^f J)_{\fm^{\prime_f}}$.

\textbf{Case 2.} Assume that $\mathcal M= \fm^{\prime_f}$ for some
$\fm\in\Max(R)$ such that $f^{-1}(J)\subseteq\fm$. Let
$(r,f(r)+j)\in \Reg (R\bowtie^f J)$ and $(r',f(r')+j')\in R\bowtie^f
J$. Since $R\bowtie^fJ$ has the condition $\star$, we have
$r\in\Reg(R)$. Thus the principal ideals $rR_\fm$ and $r'R_\fm$ are
comparable because $(R,\fm)$ has the regular total order property.
We may and do assume $r'R_\fm\subseteq rR_\fm$. So we have
$r'/1=(r/1)(t/u)$ for some $t/u\in R_\fm$. We claim that there
exists an element $k/y\in J_{T_\fm}$ such that $j'/1-f_\fm
(t/u)j/1=(k/y)(f_\fm (r/1)+j/1)$. If this is the case, then one has
the equality $$(r'/1,f_\fm(r'/1)+j'/1)=(t/u,f_\fm (t/u)
+k/y)(r/1,f_\fm(r/1)+j/1)$$ in the ring $R_\fm \bowtie^{f_\fm}
J_{T_\fm}$ which shows the following inclusion of principal ideals
$$(r'/1,f_\fm(r'/1)+j'/1)(R_\fm \bowtie^{f_\fm} J_{T_\fm})\subseteq(r/1,f_\fm(r/1)+j/1)(R_\fm \bowtie^{f_\fm} J_{T_\fm}).$$
This inclusion yields the inclusion $(r',f(r')+j')(R\bowtie^f
J)_{\fm^{\prime_f}}\subseteq(r,f(r)+j)(R\bowtie^f
J)_{\fm^{\prime_f}}$. It remains to prove the claim. To this end,
one notices that our assumption gives an element $l/v\in J_{T_\fm}$
such that $j/1=f_\fm (r/1)(l/v)$. Hence $f_\fm(r/1)+j/1=f_\fm
(r/1)((v+l)/v)$ and observe that the element $(v+l)/v$ is a unit
element of $S_{T_\fm}$.
\end{proof}

We have the following corollaries.

\begin{cor}
Assume that $(R,\fm)$ is a local ring, $J\subseteq\Jac(S)$ and
$f(\Reg (R))\subseteq \Reg (S)$.
\begin{enumerate}
\item If $R\bowtie^f J$ is a Pr\"{u}fer ring, then $R$ is a Pr\"ufer ring and
$J=f(r)J$ for every $r\in\Reg(R)$.
\item Assume further that $R\bowtie^f J$ has the condition $\star$. If  $R$ is a Pr\"ufer ring
and $J=f(r)J$ for every  $r\in \Reg(R)$, then $R\bowtie^f J$ is a
Pr\"ufer ring.
\end{enumerate}
\end{cor}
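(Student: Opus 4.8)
The plan is to derive both parts as direct specializations of Theorem~\ref{amalg}, the essential point being that in the local setting the multiplicative set $T_\fm$ degenerates completely. First I would record this degeneration. Since $(R,\fm)$ is local, $R\setminus\fm$ is exactly the set of units of $R$, and each unit $u$ satisfies $f(u)f(u^{-1})=1$, so $f(u)$ is a unit of $S$. As $J\subseteq\Jac(S)$, for a unit $w$ of $S$ and $j\in J$ we have $w+j=w(1+w^{-1}j)$ with $w^{-1}j\in\Jac(S)$, whence $w+j$ is again a unit. Therefore every element of $T_\fm=f(R\setminus\fm)+J$ is already a unit of $S$, and localizing at $T_\fm$ changes nothing: $S_{T_\fm}\cong S$, $J_{T_\fm}\cong J$, $R_\fm=R$, and $f_\fm=f$. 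In particular the condition ``$J_{T_\fm}=f(r)J_{T_\fm}$ for the unique maximal ideal $\fm$'' of Theorem~\ref{amalg} becomes precisely ``$J=f(r)J$''.

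For part (1) I would simply apply Theorem~\ref{amalg}(1): the hypothesis $f(\Reg(R))\subseteq\Reg(S)$ together with $R\bowtie^f J$ being Pr\"ufer gives that $R$ is Pr\"ufer and that $J_{T_\fm}=f(r)J_{T_\fm}$ for every $r\in\Reg(R)$, which the identification above rewrites as $J=f(r)J$.

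For part (2) the one additional hypothesis of Theorem~\ref{amalg}(2) that I must verify from the corollary's assumptions is $\Z(R\bowtie^f J)\subseteq\Jac(R\bowtie^f J)$. Here I would first observe that $J\subseteq\Jac(S)$ forces $\Max(S)\setminus\V(J)=\emptyset$, so by Remark~\ref{rem} the only maximal ideal of $R\bowtie^f J$ is $\fm^{\prime_f}$; thus $R\bowtie^f J$ is local with $\Jac(R\bowtie^f J)=\fm^{\prime_f}$. Using condition~$\star$ I would then check both families making up $\Z(R\bowtie^f J)$. An element $(r,f(r)+j)$ with $r\in\Z(R)$ has $r\in\fm$, since $r$ is a non-unit of the local ring $R$ and hence lies in $\fm$. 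An element $(r,f(r)+j)$ annihilated by some nonzero $j'\in J$ cannot have $r$ a unit: otherwise $f(r)$ would be a unit of $S$, hence $f(r)+j$ would be a unit by the computation above, forcing $j'=0$; so again $r\in\fm$. In either case $(r,f(r)+j)\in\fm^{\prime_f}$, establishing the containment. With this verified, Theorem~\ref{amalg}(2) applies, and rewriting $J_{T_\fm}=f(r)J_{T_\fm}$ as $J=f(r)J$ yields that $R\bowtie^f J$ is Pr\"ufer.

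The hard part is really the opening observation that $J\subseteq\Jac(S)$ and locality together force $T_\fm$ to consist of units, collapsing the localized equality of Theorem~\ref{amalg} to the clean statement $J=f(r)J$; the remaining verifications reduce to the descriptions of $\Max$, of the relevant localizations, and of condition~$\star$ recorded earlier.
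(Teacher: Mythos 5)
Your proposal is correct and follows essentially the same route as the paper: both reduce to Theorem~\ref{amalg} by observing that $J\subseteq\Jac(S)$ makes every element of $T_\fm=f(R\setminus\fm)+J$ a unit of $S$ (so $J_{T_\fm}=f(r)J_{T_\fm}$ collapses to $J=f(r)J$) and that the hypotheses force $R\bowtie^f J$ to be local. Your explicit case-check of $\Z(R\bowtie^f J)\subseteq\Jac(R\bowtie^f J)$ via condition~$\star$ is fine but more work than needed --- once $R\bowtie^f J$ is local, zero-divisors are non-units and hence automatically lie in the unique maximal ideal, which is the Jacobson radical, exactly as the paper tacitly uses.
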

\begin{proof}
It follows from the inclusion $J\subseteq\Jac(S)$ that the elements
of $T_{\fm}=f(R\setminus\fm)+J$ are units in $S$ for every $\fm\in
\Max(R)$. Hence, for every $r\in R$ and every $\fm\in \Max(R)$,
$J_{T_{\fm}}=f(r) J_{T_{\fm}}$ holds if and only if $J=f(r)J$ holds.
One also notices from Remark \ref{rem}(1)(b) that $(R,\fm)$ is a
local ring and $J\subseteq\Jac(S)$ if and only if $R\bowtie^f J$ is
local. The result is therefore immediate from Theorem \ref{amalg}.
\end{proof}

We now deal with the amalgamated duplication. Assume that $R=S$ and
$f$ is the identity map on $R$. Let $I$ be an ideal of $R$, $0\neq
r\in R$ and $\fm\in\Max(R)$. One notices that if $\fm\supseteq I$,
then $T_\fm=R\setminus\fm+I=R\setminus\fm$, otherwise, $I_{T_\fm}=0$
by \cite[Remark 2.4]{f}. Hence $I_{T_\fm}=rI_{T_\fm}$ holds if and
only if $I_\fm=rI_\fm$. It also easily obtain that $\Z(R\bowtie
I)\subseteq\Jac(R\bowtie I)$ if and only if $\Z(R)\subseteq\Jac(R)$
and $I\subseteq\Jac(R). $Therefore we can derive the following
corollary of Theorem \ref{amalg}. This corollary generalizes
\cite[Theorem 2.2]{k} to non-local case as well as provides an
answer to \cite[Question 2.11]{k}.

\begin{cor}
Let $I$ be an ideal of the ring $R$.
\begin{enumerate}
\item If $R\bowtie I$ is a Pr\"{u}fer ring, then $R$ is a Pr\"ufer ring and
$I_\fm=rI_\fm$ for every $\fm\in\Max(R)$ and every $r\in\Reg(R)$.
\item Assume further that $\Z(R)\subseteq\Jac(R)$ and $I\subseteq\Jac(R)$. If $R$ is a Pr\"ufer ring
and $I_\fm=rI_\fm$ for every $\fm\in\Max(R)$ and every
$r\in\Reg(R)$, then $R\bowtie I$ is a Pr\"ufer ring.
\end{enumerate}
\end{cor}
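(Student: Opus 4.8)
The plan is to derive this statement directly from Theorem \ref{amalg} by specializing to the case $S = R$ and $f = \mathrm{id}_R$, using the translation facts assembled in the paragraph immediately preceding the statement. First I would observe that the standing hypothesis of Theorem \ref{amalg}, namely $f(\Reg(R)) \subseteq \Reg(S)$, holds automatically here: since $f$ is the identity, $f(\Reg(R)) = \Reg(R) = \Reg(S)$. Thus both directions of Theorem \ref{amalg} are available once the remaining hypotheses are checked.

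For part (1) I would apply Theorem \ref{amalg}(1) with $J = I$ and $f(r) = r$. This yields that $R$ is Pr\"ufer and that $I_{T_\fm} = r I_{T_\fm}$ for every $\fm \in \Max(R)$ and every $r \in \Reg(R)$. The only remaining task is to rewrite this condition in terms of $I_\fm$; but this is exactly the equivalence $I_{T_\fm} = r I_{T_\fm} \iff I_\fm = r I_\fm$ recorded just before the statement, which follows from $T_\fm = R \setminus \fm$ when $\fm \supseteq I$ and $I_{T_\fm} = 0$ otherwise (\cite[Remark 2.4]{f}). Hence $I_\fm = r I_\fm$, as claimed.

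For part (2) I would verify the extra hypotheses required to invoke Theorem \ref{amalg}(2). First, $R \bowtie I$ satisfies the condition $\star$: since $f = \mathrm{id}_R$ is surjective we have $I = J \subseteq f(R) = R$, so part (3) of Lemma \ref{zdd} applies. Second, the assumptions $\Z(R) \subseteq \Jac(R)$ and $I \subseteq \Jac(R)$ translate, by the equivalence recorded before the statement, into $\Z(R \bowtie I) \subseteq \Jac(R \bowtie I)$. Finally, the hypothesis $I_\fm = r I_\fm$ is the same as $I_{T_\fm} = r I_{T_\fm}$ by the same dictionary used in part (1). With $R$ Pr\"ufer and all these conditions in place, Theorem \ref{amalg}(2) gives that $R \bowtie I$ is Pr\"ufer.

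Since every ingredient is either a direct specialization of Theorem \ref{amalg} or one of the translation facts already established, I do not expect a genuine obstacle; the entire content lies in confirming that the hypotheses of the general theorem are met in the duplication setting. The only point requiring a moment's care is checking that the localized condition $J_{T_\fm} = f(r) J_{T_\fm}$ collapses to the cleaner $I_\fm = r I_\fm$, and this is precisely what the preliminary analysis of $T_\fm$ supplies.
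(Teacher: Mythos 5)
Your proof is correct and is essentially the paper's own derivation: the paper obtains this corollary in exactly the same way, by specializing Theorem \ref{amalg} to $S=R$ and $f=\mathrm{id}_R$, using the same translation facts recorded in the preceding paragraph ($T_\fm=R\setminus\fm+I=R\setminus\fm$ when $\fm\supseteq I$ and $I_{T_\fm}=0$ otherwise, whence $I_{T_\fm}=rI_{T_\fm}$ iff $I_\fm=rI_\fm$, and $\Z(R\bowtie I)\subseteq\Jac(R\bowtie I)$ iff $\Z(R)\subseteq\Jac(R)$ and $I\subseteq\Jac(R)$), together with the observation, already made after Lemma \ref{zdd}, that the amalgamated duplication always satisfies condition $\star$ because $f$ is surjective. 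Your verification that $f(\Reg(R))\subseteq\Reg(S)$ holds trivially for the identity map is the same implicit step the paper relies on, so the two arguments coincide.
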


\begin{cor} (See \cite[Theorem 2.2]{k})
Let $(R,\fm)$ be a local ring and $I$ a proper ideal of $R$. Then
$R\bowtie I$ is a Pr\"ufer ring if and only if $R$ is a Pr\"ufer
ring and $I=rI$ for every $r\in\Reg(R)$.
\end{cor}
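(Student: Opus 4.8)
The plan is to read off the statement from the corollary immediately preceding it, specializing to the local case and checking that its auxiliary hypotheses become automatic. Throughout we are in the duplication setting $R=S$, $f=\mathrm{id}_R$, $J=I$, so the general machinery of Section 3 applies directly.

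First I would record what locality does to the conditions. Since $(R,\fm)$ is local, $\Max(R)=\{\fm\}$ and the localization map $R\to R_\fm$ is an isomorphism, as every element outside $\fm$ is already a unit. Hence $I_\fm=I$ and $rI_\fm=rI$ for every $r$, so the clause ``$I_\fm=rI_\fm$ for every $\fm\in\Max(R)$ and every $r\in\Reg(R)$'' occurring in the preceding corollary coincides exactly with the clause ``$I=rI$ for every $r\in\Reg(R)$'' in the present statement.

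For the forward implication I would apply part (1) of the preceding corollary verbatim: if $R\bowtie I$ is Pr\"ufer, then $R$ is Pr\"ufer and $I_\fm=rI_\fm$ for every $r\in\Reg(R)$, which by the translation above is precisely $I=rI$ for every $r\in\Reg(R)$. For the converse I would invoke part (2) of that corollary, and the only thing to verify is that its two standing hypotheses hold here automatically. In a local ring the set of non-units is exactly $\fm=\Jac(R)$; since every zero-divisor is a non-unit we get $\Z(R)\subseteq\fm=\Jac(R)$, and since $I$ is proper we get $I\subseteq\fm=\Jac(R)$. With both hypotheses in force, part (2) yields that $R$ Pr\"ufer together with $I=rI$ (equivalently $I_\fm=rI_\fm$) for every $r\in\Reg(R)$ forces $R\bowtie I$ to be Pr\"ufer.

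There is no genuine obstacle beyond this bookkeeping: the substantive work lives in Theorem \ref{amalg} and the corollary preceding this one, where the duplication automatically satisfies the condition $\star$ (because $f=\mathrm{id}_R$ is surjective, so part (3) of Lemma \ref{zdd} applies) and the requirement $f(\Reg(R))\subseteq\Reg(S)$ holds trivially. The single point deserving a moment's attention is the identification $R_\fm\cong R$, which is what collapses the global-looking hypothesis $I_\fm=rI_\fm$ to the local statement $I=rI$ and makes the $\Jac$-hypotheses of part (2) vacuous in this setting.
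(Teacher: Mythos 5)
Your proposal is correct and matches the paper's intent exactly: the paper states this corollary without proof, as an immediate specialization of the preceding corollary on $R\bowtie I$, and your bookkeeping (locality gives $R\cong R_\fm$, so $I_\fm=rI_\fm$ collapses to $I=rI$, while $\Z(R)\subseteq\fm=\Jac(R)$ and $I\subseteq\fm$ make the hypotheses of part (2) automatic) is precisely the verification being left to the reader. Your side remarks on condition $\star$ via surjectivity of $f=\mathrm{id}_R$ and on $f(\Reg(R))\subseteq\Reg(S)$ are likewise consistent with how the paper derives the duplication case from Theorem \ref{amalg}.
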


We shall now describe the behaviour of Pr\"{u}fer condition on trivial extension.

\begin{cor}\label{trivial}
Let $M$ be an $R$-module such that $\Z(M)\subseteq\Z(R)$.
\begin{enumerate}
\item If $R\ltimes M$ is a Pr\"{u}fer ring, then $R$ is a Pr\"ufer ring and
        $M_\fm=r M_\fm$ for every $\fm \in \Max (R)$ and every $r\in\Reg(R)$.
\item Assume further that $\Z (R)\subseteq \Jac (R)$. If  $R$ is a Pr\"ufer
ring and $M_\fm=r M_\fm$ for every $\fm\in\Max(R)$ and every $r\in
\Reg(R)$, then $R\ltimes M$ is a Pr\"ufer ring.
\end{enumerate}
\end{cor}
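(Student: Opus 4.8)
The plan is to deduce both statements directly from Theorem \ref{amalg}, applied to the amalgamation $R\bowtie^\iota J$ with $S=R\ltimes M$, $J=0\ltimes M$ and $\iota\colon R\to S$ the natural embedding, using the isomorphism $R\bowtie^\iota J\cong R\ltimes M$ recorded just before Remark \ref{Axtel}. First I would verify the standing hypothesis $\iota(\Reg(R))\subseteq\Reg(S)$ of Theorem \ref{amalg}. By Remark \ref{Axtel} one has $\Z(R\ltimes M)=\{(r,m)\mid r\in\Z(R)\cup\Z(M)\}$; since we assume $\Z(M)\subseteq\Z(R)$, a regular element $r\in R$ yields $\iota(r)=(r,0)\notin\Z(R\ltimes M)$, so $\iota(r)\in\Reg(S)$, as required.

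The core of the argument is the translation of the amalgamated condition $J_{T_\fm}=\iota(r)J_{T_\fm}$ into the module condition $M_\fm=rM_\fm$. Here $T_\fm=\iota(R\setminus\fm)+J=\{(s,m)\mid s\in R\setminus\fm,\ m\in M\}$, and I note that $\iota^{-1}(J)=0$, so every maximal ideal of $R$ falls under Remark \ref{rem}(2)(c). The key observation is that the $M$-component of $S$ is nilpotent, so every $(s,m)\in T_\fm$ becomes a unit as soon as $(s,0)$ is inverted: in $\iota(R\setminus\fm)^{-1}S\cong R_\fm\ltimes M_\fm$ the element $(s/1,m/1)$ is a unit plus a nilpotent, hence a unit. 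Consequently $T_\fm$ and $\iota(R\setminus\fm)$ have the same saturation, giving a canonical isomorphism $S_{T_\fm}\cong R_\fm\ltimes M_\fm$ under which $J_{T_\fm}$ corresponds to $0\ltimes M_\fm$ and multiplication by $\iota(r)=(r,0)$ corresponds to multiplication by $r$ on $M_\fm$. Thus $J_{T_\fm}=\iota(r)J_{T_\fm}$ holds if and only if $M_\fm=rM_\fm$. With this dictionary in place, part (1) is immediate from Theorem \ref{amalg}(1).

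For part (2), besides the regularity hypothesis already checked, I must supply the two further hypotheses of Theorem \ref{amalg}(2). The condition $\star$ for $R\ltimes M$ is exactly the content of Remark \ref{Axtel}. For $\Z(R\ltimes M)\subseteq\Jac(R\ltimes M)$, I would use that the primes of $R\ltimes M$ are precisely the $\fp\ltimes M$ with $\fp\in\Spec(R)$ (as $0\ltimes M$ is nilpotent), so $\Max(R\ltimes M)=\{\fm\ltimes M\mid\fm\in\Max(R)\}$ and therefore $\Jac(R\ltimes M)=\Jac(R)\ltimes M$; combined with the description of $\Z(R\ltimes M)$ above and the assumption $\Z(M)\subseteq\Z(R)$, the inclusion $\Z(R\ltimes M)\subseteq\Jac(R\ltimes M)$ becomes equivalent to the assumed $\Z(R)\subseteq\Jac(R)$. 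Feeding all of this, together with the backward direction of the localization dictionary, into Theorem \ref{amalg}(2) gives that $R\ltimes M$ is Pr\"ufer.

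The only genuinely delicate point is the localization identification $S_{T_\fm}\cong R_\fm\ltimes M_\fm$ carrying $J_{T_\fm}$ onto $0\ltimes M_\fm$ and intertwining multiplication by $\iota(r)$ with multiplication by $r$; once this is established, the remainder is bookkeeping that matches the hypotheses and conclusions of Theorem \ref{amalg} against those of the corollary.
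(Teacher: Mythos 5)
Your proof is correct and takes essentially the same route as the paper: both deduce the corollary from Theorem \ref{amalg} via the isomorphism $R\bowtie^\iota J\cong R\ltimes M$ with $J=0\ltimes M$, first checking $\iota(\Reg(R))\subseteq\Reg(R\ltimes M)$ from $\Z(M)\subseteq\Z(R)$, then translating $J_{T_\fm}=\iota(r)J_{T_\fm}$ into $M_\fm=rM_\fm$ (the paper by observing $T_\fm=(R\ltimes M)\setminus(\fm\ltimes M)$, you by a saturation argument giving $S_{T_\fm}\cong R_\fm\ltimes M_\fm$), and finally invoking Remark \ref{Axtel} for the condition $\star$ and for $\Z(R\ltimes M)\subseteq\Jac(R\ltimes M)$. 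The added detail you supply on the localization identification merely makes explicit what the paper asserts without proof.
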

\begin{proof}
It follows from the inclusion $\Z(M)\subseteq\Z(R)$ that
$\iota(\Reg(R))\subseteq\Reg(R\ltimes M)$. On the other hand, for
$r\in R$ and $\fm\in\Max(R)$, we have
$T_\fm=\iota(R\setminus\fm)+J=R\ltimes M\setminus\fm\ltimes M$ where
$J=0\ltimes M$. Hence $J_{T_\fm}=\iota(r) J_{T_\fm}$ holds if and only if
$M_\fm=rM_\fm$ holds. Finally, it is easy to see that our assumption
$\Z(M)\subseteq\Z(R)$ together with Remark \ref{Axtel} implies
$\Z(R\ltimes M)\subseteq \Jac(R\ltimes M)$. Consequently, Theorem \ref{amalg}
completes the proof.
\end{proof}

It is worth pointing out that in the corollary above the assumption
$\Z(M)\subseteq\Z(R)$ in crucial. For example let $(R,\fm)$ be a
local integral domain which is not a valuation domain (e.g.
$R:=k+Yk(X)\llbracket Y \rrbracket$, where $k$ is a field and $X,Y$
are indeterminates over $k$). The trivial extension
$R\ltimes(R/\fm)$ is a total ring of quotients, hence, Pr\"{u}fer,
while $R$ is not (see \cite[Proposition 3.1(a)]{L86}).

\begin{cor}
Assume that $R$ is an integral domain or is a local ring and that
$M$ is a torsion-free $R$-module. Then $R\ltimes M$ is a Pr\"{u}fer
ring if and only if $R$ is a Pr\"ufer ring and $M_\fm=r M_\fm$ for every
$\fm\in\Max(R)$ and $r\in \Reg(R)$.
\end{cor}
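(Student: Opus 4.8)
The plan is to recognize this statement as a direct specialization of Corollary \ref{trivial}, so that the whole task reduces to verifying the two standing hypotheses of that corollary under the present assumptions. Recall that Corollary \ref{trivial}(1) requires only $\Z(M)\subseteq\Z(R)$, while the sufficiency direction Corollary \ref{trivial}(2) requires in addition $\Z(R)\subseteq\Jac(R)$. Once both inclusions are in hand, the desired biconditional follows verbatim from parts (1) and (2) of Corollary \ref{trivial}, since the condition $J_{T_\fm}=\iota(r)J_{T_\fm}$ there translates to $M_\fm=rM_\fm$ exactly as recorded in its proof.

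First I would check that torsion-freeness of $M$ yields $\Z(M)\subseteq\Z(R)$. Indeed, if $r\in\Z(M)$, then $rm=0$ for some nonzero $m\in M$; since $M$ is torsion-free, no regular element of $R$ can annihilate a nonzero element, so $r$ cannot be regular, i.e. $r\in\Z(R)$. This settles the first inclusion and, together with Corollary \ref{trivial}(1), already delivers the necessity direction of the statement.

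Next I would verify $\Z(R)\subseteq\Jac(R)$ in each of the two permitted cases. If $R$ is an integral domain, then $\Z(R)=\{0\}$, so the inclusion is trivial. If instead $R$ is local with maximal ideal $\fm$, then every zero-divisor is a non-unit and hence lies in $\fm=\Jac(R)$, so again $\Z(R)\subseteq\Jac(R)$. In either case the extra hypothesis needed for Corollary \ref{trivial}(2) is met, and with both inclusions established that part supplies the converse implication.

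The only point that requires any care is the unwinding of the definition of torsion-freeness into the zero-divisor inclusion $\Z(M)\subseteq\Z(R)$; everything else is immediate. I therefore do not expect a genuine obstacle: this corollary is essentially a clean repackaging of Corollary \ref{trivial} under hypotheses that force its two side conditions to hold automatically.
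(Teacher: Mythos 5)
Your proposal is correct and matches the paper's (implicit) argument exactly: the corollary is stated there without proof precisely because it is the specialization of Corollary \ref{trivial}, with torsion-freeness giving $\Z(M)\subseteq\Z(R)$ and the domain/local hypothesis giving $\Z(R)\subseteq\Jac(R)$. Both of your verifications are sound, so nothing is missing.
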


Applying the corollary above we obtain an easy proof for the
following result of Bakkari, Kabbaj, and Mahdou.

\begin{cor} {\em (cf. \cite[Theorem 2.1(1)]{BKM})}
Assume that $A\subseteq B$ is an extension of domains and let $K$ be
the quotient field of $A$. Then $A\ltimes B$ is a Pr\"{u}fer ring if
and only if $A$ is a Pr\"ufer domain and $K\subseteq B$.
\end{cor}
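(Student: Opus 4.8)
The plan is to apply the previous corollary with $R=A$ and $M=B$, so I first check its hypotheses. By assumption $A$ is an integral domain, and $B$ is a torsion-free $A$-module: if $a\in A\setminus\{0\}$ and $b\in B$ satisfy $ab=0$, then $a$ is a nonzero element of the domain $B$ (since $A$ embeds in $B$), whence $b=0$. Moreover, since $A$ is a domain we have $\Reg(A)=A\setminus\{0\}$, and an integral domain is a Pr\"ufer ring exactly when it is a Pr\"ufer domain. Hence the previous corollary reduces the claim to showing that the condition ``$B_\fm=rB_\fm$ for every $\fm\in\Max(A)$ and every $r\in A\setminus\{0\}$'' is equivalent to $K\subseteq B$.

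To establish this equivalence I would pass through the global identity $B=rB$. Fix a nonzero $r\in A$. Since localization commutes with the formation of the submodule $rB$, one has $(B/rB)_\fm\cong B_\fm/rB_\fm$ for each maximal ideal $\fm$, so the hypothesis forces $(B/rB)_\fm=0$ for all $\fm$; as an $A$-module vanishing at every maximal ideal is zero, this gives $B=rB$. Now $B=rB$ says that $1\in rB$, i.e.\ $r$ is a unit of $B$; conversely, if $r$ is a unit of $B$ then $rB=B$ and therefore $rB_\fm=B_\fm$ for every $\fm$. Thus the localized condition holds for all maximal ideals and all nonzero $r$ if and only if every nonzero element of $A$ is invertible in $B$, which is precisely the statement $K=\{a/r\mid a\in A,\ 0\neq r\in A\}\subseteq B$.

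The argument is routine once the preceding corollary is available; the only point needing care is the local-to-global step deducing $B=rB$ from $B_\fm=rB_\fm$ for all $\fm$, which rests on the standard fact that a module with vanishing localizations at every maximal ideal is itself zero. With that in hand, the identification of ``every nonzero $r\in A$ is a unit in $B$'' with ``$K\subseteq B$'' is immediate from the description of the fraction field of $A$.
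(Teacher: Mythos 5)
Your proof is correct and follows essentially the same route as the paper: both reduce the statement, via the preceding corollary applied to $R=A$ and the torsion-free module $M=B$, to the equivalence of ``$B_\fm=rB_\fm$ for all $\fm\in\Max(A)$'' with $B=rB$, and then identify $B=rB$ for all $0\neq r\in A$ with $K\subseteq B$. The only (immaterial) difference is the local-to-global step: the paper invokes $B=\bigcap_{\fm\in\Max(A)}B_{\fm}$, while you use the standard fact that an $A$-module (here $B/rB$) whose localizations at all maximal ideals vanish is zero.
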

\begin{proof}
It is well-known that $B=\cap_{\fm\in\Max(A)}B_{\fm}$. Hence, for
every $0\neq a\in A$, one has $B_\fm=aB_\fm$ for every
$\fm\in\Max(A)$ if and only if $B=aB$. This is the case if and only
if $K\subseteq B$. The conclusion is now clear by Corollary \ref{trivial}.
\end{proof}

{\em Total rings of quotients} are  trivial examples of Pr\"ufer rings.
 In \cite[Proposition 4.6]{f}, Finocchiaro  gave conditions, under which, total
 ring of quotients property transfers from $R$ to $R\bowtie^f J$.
The next proposition provides a slight
generalization as well as is a partial converse of it.

\begin{prop}\label{T}
The following statements hold:
    \begin{enumerate}
        \item Assume that $J\subseteq\Jac(S)$ and that $R\bowtie^f J$
        has the condition $\star$. If $R$ is a total
        ring of quotients, then so is $R\bowtie^f J$.
        \item Assume that $f(\Reg(R))\subseteq\Reg(S)$. If $R\bowtie^f J$ is a total
        ring of quotients, then so is $R$.
    \end{enumerate}
\end{prop}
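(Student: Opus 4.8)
The plan is to prove each part by recalling that a ring is a total ring of quotients precisely when every element is either a unit or a zero-divisor, equivalently when $\Z(A)=A\setminus A^{\times}$, and then to translate the zero-divisor information supplied by the condition $\star$ (part (1)) or by the hypothesis $f(\Reg(R))\subseteq\Reg(S)$ together with the containment of $\Z(R\bowtie^f J)$ (part (2)).

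For part (1), I would take an arbitrary element $(r,f(r)+j)\in R\bowtie^f J$ and argue that it is a zero-divisor whenever it is not a unit. First I would note that the hypothesis $J\subseteq\Jac(S)$ forces the elements of $T_{\fm}=f(R\setminus\fm)+J$ to be units in $S$, so that localizing has the effect recorded in Remark \ref{rem}. Now, since $R$ is a total ring of quotients, for the element $r\in R$ we have either $r\in\Reg(R)$, in which case $r$ is a unit of $R$, or $r\in\Z(R)$. In the first case one checks directly that $(r,f(r)+j)$ is a unit of $R\bowtie^f J$ exactly when $f(r)+j$ is a unit of $S$; here the hypothesis $J\subseteq\Jac(S)$ is what guarantees this, since $f(r)$ is a unit of $S$ (as $f$ carries the unit $r$ to a unit) and adding an element of the Jacobson radical preserves being a unit. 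In the second case $r\in\Z(R)$, and the condition $\star$ immediately places $(r,f(r)+j)$ in $\Z(R\bowtie^f J)$. Thus every element of $R\bowtie^f J$ is a unit or a zero-divisor, which is precisely the assertion.

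For part (2), I would start from an element $r\in R$ and show it is a unit or a zero-divisor. Consider $(r,f(r))\in R\bowtie^f J$. Since $R\bowtie^f J$ is a total ring of quotients, $(r,f(r))$ is either a unit or a zero-divisor. If it is a unit, then projecting onto the first coordinate (which is a ring homomorphism $R\bowtie^f J\to R$, $(r,f(r)+j)\mapsto r$) shows that $r$ is a unit of $R$. If $(r,f(r))\in\Z(R\bowtie^f J)$, then there is a nonzero $(s,f(s)+j')$ with $(r,f(r))(s,f(s)+j')=0$; this gives $rs=0$ and $f(r)(f(s)+j')=0$. If $s\neq 0$ we conclude $r\in\Z(R)$ and we are done; if $s=0$ then $j'\neq 0$ and $f(r)j'=0$, so $f(r)\in\Z(S)$, whence $f(r)\notin\Reg(S)$, and the contrapositive of $f(\Reg(R))\subseteq\Reg(S)$ yields $r\notin\Reg(R)$, i.e.\ $r\in\Z(R)$. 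Either way $r$ is a unit or a zero-divisor, so $R$ is a total ring of quotients.

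The main obstacle I anticipate is the unit computation in part (1): verifying carefully that when $r$ is a unit of $R$ the element $(r,f(r)+j)$ is a unit of $R\bowtie^f J$, using $J\subseteq\Jac(S)$ to upgrade ``$f(r)+j$ differs from a unit by a Jacobson-radical element'' to ``$f(r)+j$ is a unit of $S$'' and then exhibiting the inverse inside the amalgamated subring. The zero-divisor bookkeeping in both parts is routine once the condition $\star$ (for (1)) and the inclusion reasoning via $f(\Reg(R))\subseteq\Reg(S)$ (for (2)) are invoked.
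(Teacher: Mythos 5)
Your proposal is correct; part (2) is in substance identical to the paper's proof (you unwind by hand the always-valid inclusion of Lemma \ref{zdd} --- $rs=0$ and $f(r)(f(s)+j')=0$, with the case split on $s=0$ --- where the paper simply cites that lemma, and both conclude via $f(\Reg(R))\subseteq\Reg(S)$), but part (1) takes a mildly different route worth noting. The paper argues from the spectrum of the amalgamation: since $J\subseteq\Jac(S)$, the set $\Max(S)\setminus\V(J)$ is empty, so by Remark \ref{rem}(1)(b) every maximal ideal of $R\bowtie^f J$ has the form $\fm^{\prime_f}$ with $\fm\in\Max(R)$; a non-unit $(r,f(r)+j)$ therefore lies in some $\fm^{\prime_f}$, giving $r\in\fm$, hence $r\in\Z(R)$ because $R$ is a total ring of quotients, and condition $\star$ finishes. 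You instead split on the dichotomy in $R$ itself ($r$ a unit or $r\in\Z(R)$) and, in the unit case, lift invertibility directly: $f(r)$ is a unit of $S$, $J\subseteq\Jac(S)$ upgrades $f(r)+j$ to a unit, and the inverse $(r^{-1},(f(r)+j)^{-1})$ does lie in the subring since $(f(r)+j)^{-1}-f(r^{-1})=-f(r^{-1})(f(r)+j)^{-1}j\in J$ --- the one computation you rightly flagged, and it goes through. The two arguments use the hypothesis $J\subseteq\Jac(S)$ in logically equivalent ways (your unit-lifting is exactly what makes $\fm^{\prime_f}$ the only maximal ideals); yours is more elementary and self-contained, avoiding the structure theory of $\Max(R\bowtie^f J)$, while the paper's is shorter given Remark \ref{rem}. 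Your opening sentence in part (1) about the elements of $T_{\fm}$ being units and the localization formulas is never actually used and can be deleted.
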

\begin{proof}
The proof of (1) is similar to that of \cite[Proposition 4.6]{f}, but,
for the reader's convenience, we derive it under the general assumption
that $R\bowtie^f J$ has the condition $\star$. Assume that $R$ is a total
ring of quotients and let $(r,f(r)+j)$ be a non-unit element of $R\bowtie^f J$.
Hence $(r,f(r)+j)\in \fm^{\prime_f}$ for some $\fm\in\Max(R)$; so that
$r\in\fm$. This in conjunction with the assumption that $R$ is a total
ring of quotients yields $r\in\Z(R)$. It now follows from Lemma \ref{zdd}
that $(r,f(r)+j)\in \Z(R\bowtie^f J)$. Therefore $R\bowtie^f J$ is a total
ring of quotients.
To prove (2), let $r\in R$. If $(r,f(r))$ is a unit of
$R\bowtie^f J$, then obviously $r$ is a unit of $R$. Otherwise
$(r,f(r))$ is a zero-divisor of $R\bowtie^f J$. Our desired result
will be established if $r$ is a zero-divisor of $R$. If not, by Lemma
\ref{zdd}, there exists $j\in J$ such that $jf(r)=0$, which
contradicts since $f(r)$ is regular.
\end{proof}

\section{Transfer of Gaussian condition}

Our goal in this section is to determine the behaviour
of Gaussian condition under amalgamated construction.
Let $X$ be an indeterminate over $R$. The \emph{content ideal}
$c_R(f)$ of a polynomial $f\in R[X]$ is defined to be the ideal of
$R$ generated by the coefficients of $f$. A ring $R$ is called a
\emph{Gaussian ring} if $c_R(fg)=c_R(f )c_R(g)$ for every two
polynomials $f,g\in R[x]$. It is shown in \cite[Theorem 2.2]{T} that
a local ring $R$ is Gaussian if and only if, for every two elements
$a,b \in R,$ the following two properties hold: (i) $(a,b)^2= (a^2)$
or $(b^2)$; (ii) if $(a,b)^2= (a^2)$ and $ab=0$, then $b^2=0$.
Using this characterization, the authors in \cite[Theorem 3.2(2) and Corollary 3.8(2)]{k}
established that the amalgamated duplication ring $R\bowtie I$
is Gaussian if and only if $R$ is Gaussian, $I_\fm^2=0$ and $rI_\fm=r^2I_\fm$
for every $\fm\in\Max(R)\cap\V(I)$ and every $r\in\fm$.
Then, again, by means of this characterization,
for a local ring $(R,\fm)$ and assuming
$J^2=0$, it is shown that $R\bowtie^f J$ is Gaussian if and only if
so is $R$ and $f(r)J=f(r)^2J$ for every $r\in\fm$ \cite[Theorem
2.1(2)]{ga}. In the following,
we give an improvement to this result provided that
$J\subseteq f(R)$.

\begin{thm}\label{g}
Assume that $(R,\fm)$ is a local ring and $J\subseteq f(R)\cap \Jac
(S)$ (e.g. $f$ is surjective). Then $R\bowtie^f J$ is Gaussian if
and only if $R$ is Gaussian, $J^2=0$ and $f(r)J=f(r)^2J$ for every
$r\in \fm$.
\end{thm}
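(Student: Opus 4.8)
The plan is to reduce everything to the known local case \cite[Theorem 2.1(2)]{ga}, whose sole additional hypothesis beyond ours is that $J^2=0$. Since $R$ is local and $J\subseteq\Jac(S)$, Remark \ref{rem}(1)(b) shows that $R\bowtie^f J$ is local, with maximal ideal $\fm^{\prime_f}$, so the Gaussian characterization \cite[Theorem 2.2]{T} is available for $R\bowtie^f J$. With this in place, the ``if'' direction is immediate: assuming $R$ is Gaussian, $J^2=0$ and $f(r)J=f(r)^2J$ for every $r\in\fm$, we are exactly in the situation of \cite[Theorem 2.1(2)]{ga}, which yields that $R\bowtie^f J$ is Gaussian. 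Hence the entire content is concentrated in the ``only if'' direction, and there the crucial new point is to prove that the Gaussian property of $R\bowtie^f J$ already forces $J^2=0$; once that is known, \cite[Theorem 2.1(2)]{ga} again supplies the remaining two assertions, namely that $R$ is Gaussian and that $f(r)J=f(r)^2J$ for every $r\in\fm$.

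So suppose $R\bowtie^f J$ is Gaussian. I would first show that $j^2=0$ for every $j\in J$. Because $J\subseteq f(R)$, I may write $j=f(c)$ for some $c\in R$ and then set $a:=(0,j)$ and $b:=(c,0)$; the latter lies in $R\bowtie^f J$ since $(c,0)=(c,f(c)-j)$ and $-j\in J$. A direct computation gives $ab=0$, $a^2=(0,j^2)$ and $b^2=(c^2,0)$, so $(a,b)^2=(a^2,b^2)$. Condition (i) of \cite[Theorem 2.2]{T} forces $(a,b)^2=(a^2)$ or $(a,b)^2=(b^2)$. In the first case $b^2\in(a^2)$, and comparing first coordinates gives $c^2=0$, whence $j^2=f(c)^2=f(c^2)=0$; in the second case $a^2\in(b^2)$, and comparing second coordinates gives $j^2=0$ directly. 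Thus $j^2=0$ for all $j\in J$. To upgrade this to $J^2=0$, take arbitrary $j,k\in J$ and apply condition (i) to $a:=(0,j)$ and $b:=(0,k)$: now $a^2=(0,j^2)=0$ and $b^2=(0,k^2)=0$, so whichever of $(a^2)$ or $(b^2)$ equals $(a,b)^2$, we conclude $(a,b)^2=0$; since $ab=(0,jk)\in(a,b)^2$ this gives $jk=0$. Therefore $J^2=0$.

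Having established $J^2=0$ we are back in the hypotheses of \cite[Theorem 2.1(2)]{ga}, and reading that equivalence in the forward direction yields that $R$ is Gaussian and that $f(r)J=f(r)^2J$ for every $r\in\fm$, completing the ``only if'' direction.

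I expect the main obstacle to be the very first step, the derivation of $j^2=0$, which is precisely where the extra hypothesis $J\subseteq f(R)$ does its work: it is what allows each $j$ to be realized as $f(c)$ and hence permits the construction of the companion element $b=(c,0)$ with $ab=0$ and $b^2=(c^2,0)$. Without a handle of this kind one only obtains relations internal to $J$ (and a polarization argument from $j^2=0$ gives merely $2jk=0$), which is presumably the reason the earlier version \cite[Theorem 2.1(2)]{ga} had to postulate $J^2=0$ from the outset rather than deduce it.
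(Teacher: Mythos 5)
Your proposal is correct and takes essentially the same approach as the paper: the backward direction is delegated to \cite[Theorem 2.1(2)]{ga}, and the forward direction extracts $J^2=0$ from Tsang's local characterization \cite[Theorem 2.2]{T} by writing $j=f(c)$ and applying condition (i) to a suitable pair of elements, then squeezing $jk=0$ out of $j^2=k^2=0$ by a second application of the criterion. The only (cosmetic) differences are that you test the pair $(0,j)$, $(c,0)$ where the paper uses $(a,j)$, $(0,j)$, and that you obtain the remaining conclusions ($R$ Gaussian and $f(r)J=f(r)^2J$) by re-invoking \cite{ga} once $J^2=0$ is known, whereas the paper proves them directly via factor-ring stability and one more application of \cite[Theorem 2.2]{T}.
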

\begin{proof}
The backward direction is \cite[Theorem 2.1(2)]{ga}. For the forward
direction assume that $R\bowtie^f J$ is Gaussian. Then $R$ is a
Gaussian ring, since the Gaussian property is stable under factor
rings. Next, let $i,j\in J$. Assume that $j=f(a)$ for some $a\in R$.
By assumption we have $((a,j), (0,j))^2=((0,j)^2)$ or $((a,j)^2)$
\cite[Theorem 2.2]{T}. If the first case happens, then $a^2=0$; so
that $j^2=0$. The second case yields $j^2(1-j')=0$ for some $j'\in
J$. Thus $j^2=0$. Similarly $i^2=0$. Hence $ij=0$. To this end one
notices that $R\bowtie^f J$ is Gaussian. Let now $0\neq r\in \fm$
and $j\in J$. The last equality is the direct consequence of the
statement $((r,f(r)), (0,j))^2=((r,f(r))^2)$ or $((0,j)^2)$ together
with $J^2=0$.
\end{proof}

In the course of this paper, for an  $R$-module $M$, $\Supp_R(M)$ will
denote the set of all prime ideals $\fp$ of $R$ such that
$M_\fp\neq0$.

In a sense, the next corollary is a generalization of \cite[Corollary
3.8(3)]{k}.

\begin{cor}\label{cg}
Assume that $J\subseteq f(R)\cap \Jac (S)$. Then $R\bowtie^f J$ is
Gaussian if and only if $R$ is Gaussian, $J_\fm^2=0$ and
$f(r)J_\fm=f(r)^2J_\fm$ for every $\fm\in\Max(R)\cap\V(f^{-1}(J))$
and $r\in \fm$.
\end{cor}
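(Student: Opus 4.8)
The plan is to reduce the global statement to the local criterion of Theorem \ref{g} by localizing at the maximal ideals of $R\bowtie^f J$, exploiting that the Gaussian property is local: a ring is Gaussian if and only if each of its localizations at a maximal ideal is Gaussian. Since $J\subseteq\Jac(S)$, every maximal ideal of $S$ lies in $\V(J)$, so $\Max(S)\setminus\V(J)=\emptyset$, and by Remark \ref{rem}(1)(b) every maximal ideal of $R\bowtie^f J$ has the form $\fm^{\prime_f}$ with $\fm\in\Max(R)$. Thus $R\bowtie^f J$ is Gaussian if and only if $(R\bowtie^f J)_{\fm^{\prime_f}}$ is Gaussian for every $\fm\in\Max(R)$, and I would split this according to whether or not $\fm$ contains $f^{-1}(J)$.

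For $\fm\in\Max(R)\setminus\V(f^{-1}(J))$, Remark \ref{rem}(2)(b) gives $(R\bowtie^f J)_{\fm^{\prime_f}}\cong R_\fm$, so these localizations are Gaussian precisely when each such $R_\fm$ is; together with the remaining case this amounts to $R$ being Gaussian. For $\fm\in\Max(R)\cap\V(f^{-1}(J))$, Remark \ref{rem}(2)(c) gives $(R\bowtie^f J)_{\fm^{\prime_f}}\cong R_\fm\bowtie^{f_\fm}J_{T_\fm}$. Here I would first record the simplification $T_\fm=f(R\setminus\fm)$: indeed, any $f(u)+j\in T_\fm$ with $u\notin\fm$ and $j\in J$ can be written, using $J\subseteq f(R)$ and $f^{-1}(J)\subseteq\fm$, as $j=f(b)$ with $b\in f^{-1}(J)\subseteq\fm$, whence $f(u)+j=f(u+b)$ with $u+b\notin\fm$. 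Consequently $S_{T_\fm}=S_\fm$ and $J_{T_\fm}=J_\fm$, which is what lets the local conditions be phrased in terms of $J_\fm$.

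To invoke Theorem \ref{g} for the local ring $(R_\fm,\fm R_\fm)$ with the data $f_\fm$ and $J_{T_\fm}$, I must check its hypotheses, namely $J_{T_\fm}\subseteq f_\fm(R_\fm)\cap\Jac(S_{T_\fm})$. The inclusion $J_{T_\fm}\subseteq f_\fm(R_\fm)$ follows from $J\subseteq f(R)$ by the same computation: every $k/t\in J_{T_\fm}$ has $k=f(a)$ and $t=f(w)$ with $w\notin\fm$, so $k/t=f_\fm(a/w)$. For $J_{T_\fm}\subseteq\Jac(S_{T_\fm})$ I would use that $(R\bowtie^f J)_{\fm^{\prime_f}}$ is local, hence so is $R_\fm\bowtie^{f_\fm}J_{T_\fm}$; since $R_\fm$ is local, Remark \ref{rem}(1)(b) (as already exploited in the corollary following Theorem \ref{amalg}) shows that the absence of maximal ideals of $\overline{\fq}^{f_\fm}$-type forces $J_{T_\fm}\subseteq\Jac(S_{T_\fm})$.

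With the hypotheses verified, Theorem \ref{g} says $R_\fm\bowtie^{f_\fm}J_{T_\fm}$ is Gaussian if and only if $R_\fm$ is Gaussian, $J_{T_\fm}^2=0$, and $f_\fm(\bar r)J_{T_\fm}=f_\fm(\bar r)^2J_{T_\fm}$ for all $\bar r\in\fm R_\fm$. Using $J_{T_\fm}=J_\fm$ the first two read ``$R_\fm$ Gaussian'' and $J_\fm^2=0$; for the last, writing $\bar r=r/s$ with $r\in\fm$, $s\in R\setminus\fm$ and clearing the unit $f(s)$ of $S_{T_\fm}$ turns it into $f(r)J_\fm=f(r)^2J_\fm$ for every $r\in\fm$. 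Assembling the two cases yields exactly that $R$ is Gaussian and that $J_\fm^2=0$ and $f(r)J_\fm=f(r)^2J_\fm$ for every $\fm\in\Max(R)\cap\V(f^{-1}(J))$ and $r\in\fm$. The main obstacle I anticipate is the bookkeeping identification $J_{T_\fm}=J_\fm$ (and $S_{T_\fm}=S_\fm$) together with the verification that $J_{T_\fm}\subseteq\Jac(S_{T_\fm})$, since these are precisely what allow the purely local criterion of Theorem \ref{g} to be transcribed into the stated global conditions.
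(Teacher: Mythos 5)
Your proposal is correct and takes essentially the same route as the paper: both reduce to Theorem \ref{g} by localizing at the maximal ideals $\fm^{\prime_f}$ (the only type present since $J\subseteq\Jac(S)$), using the key identification $T_\fm=f(R\setminus\fm)+J=f(R\setminus\fm)$, whence $J_{T_\fm}=J_\fm$ and $J_{T_\fm}\subseteq f_\fm(R_\fm)$. Your write-up simply makes explicit some details the paper leaves implicit, namely the locality of the Gaussian property and the verification that $J_{T_\fm}\subseteq\Jac(S_{T_\fm})$.
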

\begin{proof}
One can see $J_\fm= J_{T_\fm}$ for every $\fm\in\V(f^{-1}(J))$ since
$T_\fm=f(R\setminus\fm)+J=f(R\setminus\fm)$. It also follows from
the assumption $J\subseteq f(R)$ that $J_{T_\fm}\subseteq
f_\fm(R_\fm)$ for every $\fm\in\Supp_R(J)\cap\V(f^{-1}(J))$. Hence
Theorem \ref{g} completes the proof.
\end{proof}

The next example illustrates the assumption $J\subseteq f(R)$ in the
forward direction of the theorem above is essential.

\begin{exam}\label{ex}
Let $\mathbb{Z}$ be the ring of integers and $\fp=p\mathbb{Z}$ be
the prime ideal generated by a prime number $p$. Let $X$ be an
indeterminate over the field of rational numbers $\mathbb{Q}$, and
let $R:=\mathbb{Z}_{\fp}$, $S:=\mathbb{Q}\llbracket X \rrbracket$
the formal power series ring over $\mathbb{Q}$,
$J:=X\mathbb{Q}\llbracket X \rrbracket$ the unique maximal ideal of
$S$, $f:R\to S$ be the inclusion homomorphism. It is clear that
$J\nsubseteq f(R)$. Notice that $R\bowtie^f J$ is isomorphic to the
composite ring extension $\mathbb{Z}_\fp+X\mathbb{Q}\llbracket X
\rrbracket$, by \cite[Example 2.5]{DFF}. It follows from
\cite[Theorem 1.3]{ht07} that $R\bowtie^f J$ is a Pr\"{u}fer domain,
hence, a Gaussian ring \cite[Corollary 28.5]{g72}, while $J^2\neq0$.
\end{exam}

As an application of our results we construct some examples of Pr\"{u}fer rings
which are not Gaussian.

\begin{exam}\label{epng}
Let $k$ be a field and $X$ an indeterminate over $k$. Let
$R:=k[X]/(X^8)$ which is a total quotient ring, $S:=k[X]/(X^4)$,
$J:=(X^2)/(X^4)$ and $f$ be the canonical surjection. Then
$R\bowtie^f J$ has the condition $\star$ since $f$ is surjective. It
follows from Proposition \ref{T}(1) that $R\bowtie^f J$ is a total
quotient ring, hence Pr\"{u}fer. However $R\bowtie^f J$ is not a
Gaussian ring by Theorem \ref{g}, since $XJ\neq X^2J$.
\end{exam}

The next example handles the non-local case.

\begin{exam}
Let $R:=\mathbb{Z}/48\mathbb{Z}$, $S:=\mathbb{Z}/24\mathbb{Z}$,
$J:=6\mathbb{Z}/24\mathbb{Z}$ and $f$ be the canonical surjection.
Then, as Example \ref{epng}, $R\bowtie^f J$ has the condition $\star$ and it
follows from Proposition \ref{T}(1) that $R\bowtie^f J$ is a
Pr\"{u}fer ring. But, it is not a
Gaussian ring by Corollary \ref{cg}, since the natural
image of $12=6^2-24$ in $J_\fm^2$ is not zero, where $\fm=2\mathbb{Z}/48\mathbb{Z}$.
\end{exam}

\section{Transfer of arithmetical condition}

This section deals with arithmetical condition on amalgamated
construction. The ring $R$ is said to be an \emph{arithmetical} ring if every
finitely generated ideal of $R$ is locally principal \cite{j66}.
Thus $R$ is an arithmetical ring if and only if $R$ is a locally
chain ring. Recall that an $R$-module $M$ is said to be
\emph{uniserial} if its set of submodules is totally ordered by
inclusion and $R$ is a \emph{chain ring} if it is uniserial as
$R$-module.

The special case of the following result has been appeared in
\cite[Proposition 1.1]{c15}.

\begin{thm}\label{chain}
Assume that $J$ is a non-zero ideal of $S$. If $R\bowtie^f J$ is a
chain ring, then $R$ is a valuation domain and $J=(f(a)+j)J$ for
every $0\neq a\in R$ and $j\in J$. The converse holds provided that
$J$ is a uniserial $R$-module.
\end{thm}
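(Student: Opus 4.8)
The plan is to prove the two implications separately, in both directions relying on the principle that $R\bowtie^f J$ is a chain ring precisely when any two of its elements generate comparable principal ideals (for if all principal ideals are comparable, then given ideals $\mathfrak a\not\subseteq\mathfrak b$ one picks $x\in\mathfrak a\setminus\mathfrak b$ and shows every $y\in\mathfrak b$ lies in $xR\bowtie^fJ\subseteq\mathfrak a$). Throughout I would exploit the explicit product rule $(r,f(r)+i)(s,f(s)+j')=(rs,(f(r)+i)(f(s)+j'))$ and the quotient/localization facts of Remark \ref{rem}. Write $N:=\{(0,j)\mid j\in J\}$ for the kernel of the first-coordinate projection $(r,f(r)+j)\mapsto r$, which realizes $R$ as the quotient $(R\bowtie^fJ)/N$.

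For the forward direction I would first record that $R$ is a chain ring, since it is a quotient of the chain ring $R\bowtie^fJ$ and the chain property passes to quotients. Next, for each $0\neq a\in R$ the ideals $N$ and $(a,f(a))(R\bowtie^fJ)$ are comparable; the inclusion $(a,f(a))(R\bowtie^fJ)\subseteq N$ would force $a=0$, so necessarily $N\subseteq (a,f(a))(R\bowtie^fJ)$. Writing $(0,i)=(a,f(a))(s,f(s)+j')$ gives $as=0$, hence $f(a)f(s)=0$, so the second coordinate reduces to $i=f(a)j'$; this yields $J=f(a)J$ for every nonzero $a$. The domain property of $R$ then follows at once: if $ab=0$ with $a,b\neq0$ then $J=f(a)J=f(a)f(b)J=f(ab)J=0$, contradicting $J\neq0$, so $R$ is a valuation domain. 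Finally, now that $R$ has no zero-divisors, repeating the computation with $(a,f(a)+j)$ in place of $(a,f(a))$ forces $s=0$ and upgrades the relation to the full $J=(f(a)+j)J$.

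For the converse I assume $R$ is a valuation domain, that $J=(f(a)+j)J$ for all $0\neq a$ and $j\in J$, and that $J$ is a uniserial $R$-module, and I aim to show any $\alpha=(a,f(a)+i)$ and $\beta=(b,f(b)+j)$ generate comparable principal ideals. If $a,b$ are both nonzero, the valuation hypothesis makes $aR,bR$ comparable, say $b=ac$; solving $(f(a)+i)j'=j-if(c)$ for $j'\in J$ is possible because $j-if(c)\in J=(f(a)+i)J$, and this exhibits $\beta=\alpha\,(c,f(c)+j')$. If exactly one of $a,b$ vanishes, say $a=0$, the domain property forces the first coordinate of any factor of $(0,i)$ to be zero, and $J=(f(b)+j)J$ lets me write $(0,i)=\beta\,(0,j')$. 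The remaining case $a=b=0$ compares the principal ideals of $(0,i)$ and $(0,j)$: here $(0,i)(R\bowtie^fJ)=\{(0,iw)\mid w\in f(R)+J\}$, and since $J$ is uniserial over $R$ the cyclic submodules $Ri=if(R)$ and $Rj=jf(R)$ are comparable; writing $j=f(r)i$ gives $j(f(R)+J)\subseteq i(f(R)+J)$, whence the two ideals are comparable.

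The main obstacle I anticipate is precisely this last case of the converse, where both first coordinates vanish: unlike the other cases it cannot be handled by the relation $J=(f(a)+j)J$ alone, and it is exactly here that the extra hypothesis that $J$ is uniserial over $R$ becomes indispensable. Care is also required in sequencing the forward direction, namely establishing $J=f(a)J$ and the resulting domain property of $R$ \emph{before} attempting the finer relation $J=(f(a)+j)J$, since the vanishing of the auxiliary first coordinate is only available once $R$ is known to be a domain.
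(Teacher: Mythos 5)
Your proof is correct and takes essentially the same route as the paper's: quotient by $\{(0,j)\mid j\in J\}$ to get the chain property of $R$, comparability of principal ideals to obtain $J=f(a)J$, hence the domain (so valuation) property, and only then the refined relation $J=(f(a)+j)J$ (the same sequencing the paper uses, since the paper's step ``$ra=0$ implies $r=0$'' also needs the domain property first), with the converse handled by the identical case split on vanishing first coordinates and uniseriality invoked exactly in the case $a=b=0$. The only cosmetic differences are that you compare the kernel ideal $N$ with a principal ideal where the paper compares two principal ideals $(0,l)$ and $(a,f(a))$, and your three-case converse collapses to the paper's two cases, which absorb your ``exactly one zero'' case into $b\neq0$, $a=bc$ with $c=0$.
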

\begin{proof}
Assume that $R\bowtie^f J$ is a chain ring. Then, as a factor ring,
$R$ is a chain ring. Let $0\neq a\in R$ and $l\in J$. Clearly
$(a,f(a))\notin(0,l)(R\bowtie^f J)$. Then $(0,l)=(r,f(r)+i)(a,f(a))$
for some $(r,f(r)+i)\in R\bowtie^f J$; so that $ra=0$ and $l=f(a)i\in
f(a)J$. Hence $J=f(a)J$ and this shows that $R$ is a valuation
domain. Now if further $j\in J$, there is $(r,f(r)+k)\in R\bowtie^f
J$ such that $(0,l)=(a,f(a)+j)(r,f(r)+k)$. Hence $ra=0$ which
implies that $r=0$. Thus $l=(f(a)+j)k\in (f(a)+j)J$. Therefore
$J=(f(a)+j)J$.

For the converse assume that $J$ is a uniserial $R$-module. Let
$(a,f(a)+j), (b,f(b)+i)\in R\bowtie^f J$ be arbitrary. Assume that
$b\neq0$ and $a=bc$. Then there exists $k\in J$ such that
$j-f(c)i=(f(a)+i)k$. Hence $(a,f(a)+j)=(b,f(b)+i)(c,f(c)+k)$. In the
case that $b=0$, there is $r\in R$ such that $j=ri(=f(r)i)$ since
$J$ is uniserial. Hence $(0,j)=(r,f(r))(0,i)$.
\end{proof}

We notice that one can not necessarily
deduce $J$ is a uniserial $R$-module from the assumption that
$R\bowtie^f J$ is a chain ring. In fact,
bearing in mind the notations of Example \ref{ex}, it is clear that the
$\mathbb{Z}_{\fp}$-submodules of $J$ generated by the
two elements $X$ and $X^2$ are not comparable.

\begin{cor}\label{J}
The following statements hold:
    \begin{enumerate}
        \item Assume that $J$ is a non-zero ideal of $S$ such that $J^2=0$.
        Then $R\bowtie^f J$ is a chain ring if and only
if $R$ is a valuation domain, $J$ is a uniserial $R$-module and
$J=f(a)J$ for every $0\neq a\in R$.
        \item Assume that $J\subseteq f(R)$. Then $R\bowtie^f J$ is a chain ring if and only
if $R$ is a chain ring and $J=0$.
    \end{enumerate}
\end{cor}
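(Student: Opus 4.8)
The plan is to deduce both parts from Theorem \ref{chain}, which already isolates the conditions ``$R$ is a valuation domain'' and ``$J=(f(a)+j)J$ for every $0\neq a\in R$ and $j\in J$''. The work in each part is to convert these into the cleaner equivalent conditions stated under the respective extra hypothesis ($J^2=0$ in (1), $J\subseteq f(R)$ in (2)).

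For part (1), the forward direction follows from Theorem \ref{chain} once I observe two things. First, specializing $j=0$ in $J=(f(a)+j)J$ gives $J=f(a)J$ for every $0\neq a\in R$. Second, to see that $J$ is a uniserial $R$-module, I would attach to each $R$-submodule $N$ of $J$ the subset $0\times N:=\{(0,n)\mid n\in N\}$ of $R\bowtie^f J$. The key point---and this is exactly where $J^2=0$ enters---is that $0\times N$ is an \emph{ideal}: for $(0,n)\in 0\times N$ and $(r,f(r)+j)\in R\bowtie^f J$ one computes $(0,n)(r,f(r)+j)=(0,f(r)n+nj)$, where $nj\in J^2=0$, while $f(r)n=r\cdot n\in N$ since $N$ is an $R$-submodule and $J$ carries its $R$-structure through $f$. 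As $R\bowtie^f J$ is a chain ring, its ideals are totally ordered, and $0\times N_1\subseteq 0\times N_2$ is plainly equivalent to $N_1\subseteq N_2$; hence the submodules of $J$ are totally ordered, i.e. $J$ is uniserial. For the converse I would verify the hypothesis $J=(f(a)+j)J$ needed by Theorem \ref{chain}: given $J=f(a)J$, any $x\in J$ can be written $x=f(a)y$ with $y\in J$, and then $(f(a)+j)y=f(a)y+jy=x$ since $jy\in J^2=0$, so $J\subseteq(f(a)+j)J\subseteq J$. With $R$ a valuation domain and $J$ uniserial, the converse half of Theorem \ref{chain} then gives that $R\bowtie^f J$ is a chain ring.

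For part (2), the backward direction is trivial, since $J=0$ forces $R\bowtie^f J=\{(r,f(r))\mid r\in R\}\cong R$. For the forward direction I would argue by contradiction: assume $R\bowtie^f J$ is a chain ring but $J\neq0$, and pick $0\neq l\in J$. Because $J\subseteq f(R)$ we may write $l=f(a)$, and $a\neq0$ as $f(0)=0\neq l$. Since $-l=-f(a)\in J$, the element $(a,0)=(a,f(a)+(-l))$ lies in $R\bowtie^f J$, and $(0,l)$ does as well; both are non-zero. By the chain property these two principal ideals are comparable. If $(a,0)\in(0,l)(R\bowtie^f J)$, comparing first coordinates gives $a=0$, a contradiction; if instead $(0,l)\in(a,0)(R\bowtie^f J)$, then, as the second coordinate of any multiple of $(a,0)$ is $0$, we get $l=0$, again a contradiction. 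Hence $J=0$, and then $R\bowtie^f J\cong R$ is a chain ring, so $R$ is a chain ring.

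I expect the only real obstacle to be the uniserial claim in part (1): the statement ``$J$ is uniserial'' concerns $R$-submodules of $J$, whereas the chain hypothesis concerns ideals of $R\bowtie^f J$, so the crux is to recognize that $J^2=0$ is precisely what turns $0\times N$ into an ideal and thereby transports the total ordering from ideals down to submodules. Everything else is a routine specialization ($j=0$) or a short computation using $J^2=0$, and part (2) is a direct two-case comparability argument.
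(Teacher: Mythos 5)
Your proposal is correct, and its overall skeleton matches the paper's: both parts are reduced to Theorem \ref{chain}, with the extra hypothesis ($J^2=0$, resp. $J\subseteq f(R)$) used to translate the condition $J=(f(a)+j)J$ into the cleaner form. Two local steps are genuinely different, and both of your variants work. For uniseriality in part (1), the paper argues element-wise: given $i,j\in J$, the chain hypothesis makes one of $(0,i)$, $(0,j)$ divide the other, say $(0,j)=(r,f(r)+k)(0,i)$, whence $j=(f(r)+k)i=f(r)i$ since $ki\in J^2=0$; this compares cyclic submodules and leaves implicit the (standard) step that comparability of cyclic submodules yields comparability of arbitrary submodules. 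You instead transport every submodule $N$ of $J$ to the subset $0\times N$ and use $J^2=0$ to show it is an \emph{ideal} of $R\bowtie^f J$, pulling the total order on ideals back to all submodules at once; this avoids the cyclic-to-general reduction entirely and isolates precisely where $J^2=0$ enters, at the cost of checking the ideal property. For part (2), the paper derives the contradiction in one line from Theorem \ref{chain}: writing $0\neq j=f(a)$ with $a\neq 0$, the theorem's conclusion applied to the pair $(a,-j)$ gives $J=(f(a)-j)J=0\cdot J=0$. Your two-case comparability argument between $(a,0)=(a,f(a)+(-l))$ and $(0,l)$ is a correct, self-contained inlining of the same idea (it hinges on the same element $(a,0)$ and essentially re-runs the computation inside the proof of Theorem \ref{chain}), trading the slicker citation for independence from that theorem's forward direction. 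The remaining steps — specializing $j=0$, verifying $J=(f(a)+j)J$ from $J=f(a)J$ via $jy\in J^2=0$, and the trivial backward direction of (2) via $R\bowtie^f J\cong R$ when $J=0$ — agree with the paper.
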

\begin{proof}
(1) It is obvious that $J=(f(a)+j)J$ for every
$0\neq a\in R$ and $j\in J$ if and only if $J=f(a)J$ for
every $0\neq a\in R$. Then, in view of Theorem \ref{chain},
it remains for us to deduce $J$ is
uniserial when $R\bowtie^f J$ is a chain ring. Indeed, for $i,j\in
J$, one of the elements $(0,i)$ and $(0,j)$ divides the other. So we
may assume $(0,j)=(r,f(r)+k)(0,i)$ for some $(r,f(r)+k)\in
R\bowtie^f J$. Thus $j=(f(r)+k)i=f(r)i$.

(2) One implication is clear. For the other, suppose,
on the contrary, that $J\neq 0$. Then one can choose
a non-zero element $j\in J$ and, hence, a non-zero element $a\in R$ such
that $j=f(a)$. This together with Theorem \ref{chain} shows that
$J=(f(a)-j)J=0$, which is a
contradiction.
\end{proof}

\begin{cor}(see \cite[Theorem 3.2(1)]{k})
The ring $R\bowtie I$ is a chain ring if and only if $R$ is a chain
ring and $I=0$.
\end{cor}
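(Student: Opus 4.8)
The plan is to obtain this statement as an immediate specialization of Corollary \ref{J}(2). Recall that the amalgamated duplication $R\bowtie I$ is precisely the amalgamated algebra $R\bowtie^f J$ in which we take $S=R$, the homomorphism $f$ to be the identity map on $R$, and $J=I$. With this choice, I would first verify that the hypothesis of Corollary \ref{J}(2) is automatically met: since $f$ is the identity, we have $f(R)=R$, and as $I$ is by assumption an ideal of $S=R$, the inclusion $J=I\subseteq R=f(R)$ holds trivially.

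Having checked the hypothesis, I would simply invoke Corollary \ref{J}(2), which asserts that under $J\subseteq f(R)$ the ring $R\bowtie^f J$ is a chain ring if and only if $R$ is a chain ring and $J=0$. Translating back through the identification $J=I$ yields exactly the desired equivalence: $R\bowtie I$ is a chain ring if and only if $R$ is a chain ring and $I=0$.

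There is no genuine obstacle here, since all the substantive work has already been carried out in Theorem \ref{chain} and its Corollary \ref{J}(2); the only point requiring attention is the routine observation that the surjectivity-type condition $J\subseteq f(R)$ becomes vacuous in the duplication setting, where $f$ is the identity. Consequently the proof reduces to a one-line application of that corollary.
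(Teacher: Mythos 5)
Your proposal is correct and matches the paper exactly: the paper states this corollary without a separate proof, precisely because it is the immediate specialization of Corollary \ref{J}(2) to $S=R$, $f=\mathrm{id}_R$, $J=I$, where the hypothesis $J\subseteq f(R)$ holds trivially. Your one additional remark --- explicitly verifying that the hypothesis becomes vacuous in the duplication setting --- is the right (and only) thing to check.
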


We now want to generalize Corollary \ref{J} to arithmetical rings.
To this end, we need the concept of distributive
lattice of submodules.
Let $M$ be a module over the ring $R$. We say that $M$ has a
\emph{distributive} lattice of submodules if $M$ satisfies one of
the following two equivalent conditions:
\begin{enumerate}
\item $(N+L)\cap K=(N\cap K)+(L\cap K)$ for every submodules $N,
  L, K$ of $M$;
\item $(N\cap L)+ K=(N+K)\cap(L+K)$ for every submodules $N,
  L, K$ of $M$.
\end{enumerate}
It is known that $M$ has a distributive lattice of submodules if and
only if $M_\fm$ is a uniserial module for each maximal ideal $\fm$
of $R$ \cite[Proposition 1.2]{c15}.

The special case of part 2 of the following result was obtained in
\cite[Corollary 3.8(1)]{k}.

\begin{cor}\label{v}
The following statements hold:
\begin{enumerate}
  \item Assume that $J$ is a non-zero ideal of $S$ with
  $J^2=0$. Then $R\bowtie^f J$ is an
arithmetical ring if and only if $R$ is arithmetical, $R_\fp$ is a
domain for every $\fp\in\Supp_R(J)$, $J$ is locally divisible and
has a distributive lattice of submodules.
  \item Assume that $J\subseteq f(R)$. Then $R\bowtie^f J$ is an
arithmetical ring if and only if $R$ is arithmetical,
$J_\fm=0$ for every $\fm\in\Max(R)\cap\V(f^{-1}(J))$
and $S_\fq$ is a chain ring for every $\fq\in\Max(S)\setminus\V(J)$.
\end{enumerate}
\end{cor}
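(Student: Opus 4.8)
The plan is to reduce both statements to the chain-ring criteria of Corollary \ref{J} by localizing at the maximal ideals of $R\bowtie^f J$. Recall that $R\bowtie^f J$ is arithmetical exactly when it is a locally chain ring, i.e. when $(R\bowtie^f J)_{\mathcal M}$ is a chain ring for every maximal ideal $\mathcal M$. By Remark \ref{rem}(1)(b) these maximal ideals are the $\fp^{\prime_f}$ with $\fp\in\Max(R)$ together with the $\overline{\fq}^f$ with $\fq\in\Max(S)\setminus\V(J)$, and Remark \ref{rem}(2) identifies the localizations as $R_\fp$ when $f^{-1}(J)\nsubseteq\fp$, as $R_\fp\bowtie^{f_\fp}J_{T_\fp}$ when $f^{-1}(J)\subseteq\fp$, and as $S_\fq$ for the $\overline{\fq}^f$. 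Thus the whole statement becomes a question of when each of these local rings is a chain ring, and the work lies in translating the pointwise conditions into the claimed global ones.

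For part (1) I would first record two consequences of $J^2=0$. Every element of $J$ is then nilpotent, so $J\subseteq\Jac(S)$, whence $\Max(S)\setminus\V(J)=\emptyset$ and no localization of type $S_\fq$ occurs; every maximal ideal is some $\fp^{\prime_f}$. Moreover, if $f^{-1}(J)\nsubseteq\fp$ one may pick $a\in f^{-1}(J)\setminus\fp$, and then $f(a)\in J$ annihilates $J$ (as $f(a)J\subseteq J^2=0$) while $a$ is a unit in $R_\fp$, forcing $J_\fp=0$; hence $\Supp_R(J)\subseteq\V(f^{-1}(J))$ and for such $\fp$ the localization is simply $R_\fp$. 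For $\fp\in\V(f^{-1}(J))$ the localization is $R_\fp\bowtie^{f_\fp}J_{T_\fp}$, where multiplication by any $t=f(u)+j\in T_\fp$ coincides with multiplication by $f(u)$ on $J$ (again since $J^2=0$), so that $J_{T_\fp}\cong J_\fp$ as $R_\fp$-modules and $J_{T_\fp}^2=0$. Applying Corollary \ref{J}(1) to this local amalgamation, it is a chain ring exactly when either $J_\fp=0$ and $R_\fp$ is a chain ring, or $J_\fp\neq0$ and $R_\fp$ is a valuation domain with $J_\fp$ uniserial and $J_\fp=f_\fp(a)J_\fp$ for every $0\neq a$.

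It then remains to assemble these conditions. Requiring $R_\fp$ to be a chain ring at every $\fp\in\Max(R)$ is precisely the arithmeticity of $R$; demanding in addition that $R_\fp$ be a domain exactly at the $\fp\in\Supp_R(J)$ upgrades this to the valuation-domain condition, since a chain ring that is a domain is a valuation domain. The uniseriality of $J_\fp$ at every maximal $\fp$ (automatic where $J_\fp=0$) is, by \cite[Proposition 1.2]{c15}, equivalent to $J$ having a distributive lattice of submodules, and the relations $J_\fp=f_\fp(a)J_\fp$ on $\Supp_R(J)$ are what is meant by $J$ being locally divisible; this yields the four listed conditions. I expect the main obstacle to be exactly this bookkeeping: splitting ``valuation domain at $\fp\in\Supp_R(J)$'' into ``$R$ arithmetical'' plus ``$R_\fp$ a domain on $\Supp_R(J)$,'' invoking \cite[Proposition 1.2]{c15} to replace pointwise uniseriality by distributivity, and checking that the $\fp$ with $J_\fp=0$ impose no extra constraint.

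For part (2) the same scheme applies, now with $J\subseteq f(R)$ and without $J^2=0$. The localizations of type $S_\fq$ survive and contribute exactly the requirement that $S_\fq$ be a chain ring for every $\fq\in\Max(S)\setminus\V(J)$. For $\fp\in\Max(R)$ with $f^{-1}(J)\nsubseteq\fp$ the localization is $R_\fp$, giving the condition that $R_\fp$ be a chain ring. For $\fp\in\V(f^{-1}(J))$ the localization is $R_\fp\bowtie^{f_\fp}J_{T_\fp}$, where $J\subseteq f(R)$ yields both $J_{T_\fp}\subseteq f_\fp(R_\fp)$ and $J_{T_\fp}=J_\fp$ exactly as in the proof of Corollary \ref{cg}; Corollary \ref{J}(2) then shows this is a chain ring iff $R_\fp$ is a chain ring and $J_\fp=0$. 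Collecting these, $R_\fp$ being a chain ring at every $\fp\in\Max(R)$ is the arithmeticity of $R$, the vanishing $J_\fp=0$ is forced precisely at the $\fp\in\Max(R)\cap\V(f^{-1}(J))$, and the $S_\fq$ condition is as above, completing the argument.
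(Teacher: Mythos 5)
Your proposal is correct and follows essentially the same route as the paper's proof: localize at the maximal ideals of $R\bowtie^f J$ via Remark \ref{rem}, apply Corollary \ref{J} to each local amalgamation, and convert pointwise uniseriality into distributivity via \cite[Proposition 1.2]{c15}. The only cosmetic differences are that you re-derive by hand the identifications needed under $J^2=0$ (notably $J_{T_\fp}\cong J_\fp$ and $\Max(S)\setminus\V(J)=\emptyset$), which the paper delegates to \cite[Lemma 2.8]{sss16}, and that in part (2) you obtain the arithmeticity of $R$ from the localizations rather than from the factor-ring argument.
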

\begin{proof}
(1) One can employ \cite[Lemma 2.8]{sss16}
together with Corollary \ref{J} and the above-mentioned
result \cite[Proposition 1.2]{c15} to deduce the assertion.

(2) Assume that $R\bowtie^f J$ is an arithmetical ring.
First, note that, as a factor ring,
$R$ is an arithmetical ring. Then we show that $J_\fm=0$
for every $\fm\in\Max(R)\cap\V(f^{-1}(J))$. Let $\fm\in\Max(R)\cap\V(f^{-1}(J))$.
By assumptions, $(R\bowtie^fJ)_{\fm^{\prime_f}}\cong R_{\fm}\bowtie^{f_{\fm}}J_{T_{\fm}}$
is a chain ring and one has the inclusion $J_{T_{\fm}}\subseteq f_{\fm}(R_{\fm})$.
Hence $J_{T_{\fm}}=0$ by Corollary \ref{J}. On the other hand, using
the inclusions $J\subseteq f(R)$ and $f^{-1}(J)\subseteq \fm$, it is easily seen that
$T_\fm=f(R\setminus\fm)+J=f(R\setminus\fm)$. Therefore $J_\fm=0$.
Finally, the isomorphism $(R \bowtie^{f}J)_{\overline{\fq}^f}\cong S_{\fq}$
shows that $S_\fq$ is a chain ring for every $\fq\in\Max(S)\setminus\V(J)$.
The converse direction is easy to obtain.
\end{proof}

The conclusion of the corollary above fails if the
assumption $J^2=0$ or $J\subseteq f(R)$ is dropped. For example,
let $X$ be an indeterminate over the field of rational numbers
$\mathbb{Q}$, and let $R:=\mathbb{Z}$, $S:=\mathbb{Q}\llbracket X
\rrbracket$ the formal power series ring over $\mathbb{Q}$,
$J:=X\mathbb{Q}\llbracket X \rrbracket$, $f:R\to S$ be the inclusion
homomorphism. It is clear that $J\nsubseteq f(R)$ and that
$J^2\neq0$. Notice that $R\bowtie^f J
\cong\mathbb{Z}+X\mathbb{Q}\llbracket X \rrbracket$ \cite[Example
2.5]{DFF}. It follows from \cite[Theorem 1.3]{ht07} that $R\bowtie^f
J$ is a Pr\"{u}fer domain, hence, an arithmetical ring. However $J$
is not locally divisible.

In concluding we give an example of a non-arithmetical Gaussian ring.

\begin{exam}
Let $k$ be a field and $X$ an indeterminate over $k$. Set $R:=k[X]$,
$S:=k[X]/(X^2)$, $J:=(X)/(X^2)$ and let $f$ be the canonical
surjection. Then $R\bowtie^f J$ is Gaussian by Corollary \ref{cg}.
However $R\bowtie^f J$ is not an arithmetical ring by Corollary
\ref{v}, since $J\neq 0=XJ$.
\end{exam}



\end{document}